\newif\ifdraft
\renewcommand{\email}[2][]{%
  \ifx\emails\@empty\relax\else{\g@addto@macro\emails{,\space}}\fi%
  \@ifnotempty{#1}{\g@addto@macro\emails{\textrm{(#1)}\space}}%
  \g@addto@macro\emails{#2}%
}
\def\@wraptoccontribs#1#2{}
\newcounter{marnote}
    \newcommand{\sidenote}[1]{\pdfsyncstop\marginpar[\raggedleft\tiny #1]{\raggedright\tiny #1}\pdfsyncstart}
    \newcommand{\sidenote}[1]{}
\newcommand{\warning}[1]{\typeout{}\typeout{WARNING: #1 at line \the\inputlineno}\typeout{}}
\newenvironment{note}[1][TODO. ]{%
    \ifdraft\else\warning{Note environment still present in final version}\fi
    \MakeFramed{\advance\hsize-\width \FrameRestore}\noindent\textbf{#1}}%
    {\endMakeFramed}
\newcommand{\UWave}[2][blue]{\bgroup \markoverwith{\textcolor{#1}{\lower3.5\p@\hbox{\sixly \char58}}}\ULon{#2}}
\newcommand{\SOut}[2][red]{\bgroup\markoverwith {\textcolor{#1}{\rule[.45ex]{2pt}{.1ex}}}\ULon{#2}}
\newcommand{\highlight}[2][yellow]{\bgroup\markoverwith {\textcolor{#1}{\rule[-.2em]{2pt}{1.2em}}}\ULon{#2}}
\newcommand{\trc}{\mathrm{tr}}
\newcommand{\vn}{v^{(n)}}
\newcommand{\Qn}{Q^{(n)}}
\newcommand{\An}{A^{(n)}}
\newcommand{\Omn}{\Omega^{(n)}}
\newcommand{\dT}{\delta T}
    {\left\{\begin{IEEEeqnarraybox}[\relax][c]{#1}}%
    {\end{IEEEeqnarraybox}\right.}
\def\be{\begin{equation}}
\def\ee{\end{equation}}
\def\bea#1\eea{\begin{align}#1\end{align}}
\def\non{\nonumber}
\newcommand{\delt}{\prt_t}
\newcommand{\R}{\mathbb{R}}
\renewcommand{\epsilon}{\varepsilon}
\renewcommand{\leq}{\leqslant}
\renewcommand{\geq}{\geqslant}
\newif\iftextstyle
\everydisplay\expandafter{\the\everydisplay\textstylefalse}
\numberwithin{equation}{section}
\newtheorem{theorem}{Theorem}[section]
\newtheorem{lemma}[theorem]{Lemma}
\newtheorem{proposition}[theorem]{Proposition}
\newtheorem*{theorem*}{Theorem}
\newtheorem*{lemma*}{Lemma}
\newtheorem*{proposition*}{Proposition}
\newtheorem*{corollary*}{Corollary}
\theoremstyle{definition}
\theoremstyle{remark}
\newtheorem{remark}[theorem]{Remark}
\newtheorem*{remark*}{Remark}
\newtheoremstyle{cases}
  {\smallskipamount}
  {\smallskipamount}
  {}
  {\parindent}
  {\itshape}
  {.}
  {.5em}
  {\thmnote{#1 #2: #3}}
\theoremstyle{cases}
\newcommand{\Ignore}[1]{}
\newcommand{\prt}{\ensuremath{\partial}}
\newcommand{\Cal}[1]{\ensuremath{\mathcal{#1}}}
\newcommand{\LP}{\ensuremath{\Cal{P}}} %
\newcommand{\dd}{\mathrm{d}}
\def\RR{{\mathbb R}}
\def\mcF{{\mycal F}}
\def\mcI{{\mycal I}}
\def\mcL{{\mycal L}}
\def\mcN{{\mycal N}}
\def\eps{{\varepsilon}}
\patchcmd{\section}{\scshape}{\bfseries}{}{}
\renewcommand{\@secnumfont}{\bfseries}
\newcommand\testname{\textbf{Abstract}}
\newenvironment{abst}{%
    \small
    \begin{center}%
        {\textsc \testname\vspace{-.2em}\vspace{\z@}}%
    \end{center}%
    \quote
    }
   {\endquote}
\def\bproof{\noindent{\bf Proof.\;}}
\def\eproof{\hfill$\square$\medskip}
\DeclareFontFamily{OT1}{rsfs}{}
\DeclareFontShape{OT1}{rsfs}{m}{n}{ <-7> rsfs5 <7-10> rsfs7 <10-> rsfs10}{}
\DeclareMathAlphabet{\mycal}{OT1}{rsfs}{m}{n}
\def\Id{{\rm Id}}
\def\tr{{\rm tr}}
\def\be{\begin{equation}}
\def\ee{\end{equation}}
\def\Id{{\rm Id}}
\def\tr{{\rm tr}}
\def\be{\begin{equation}}
\def\ee{\end{equation}}
\def\bea#1\eea{\begin{align}#1\end{align}}
\def\non{\nonumber}
\numberwithin{equation}{section}
 \newcommand\blfootnote[1]{%
  \begingroup
  \renewcommand\thefootnote{}\footnote{#1}%
  \addtocounter{footnote}{-1}%
  \endgroup
} 
\keywords {nematic liquid crystal fluids, Navier-Stokes equations, global wellposedness.}
\title{\large 
	\textbf{\uppercase{Global well-posedness \\ and twist-wave solutions\\ for the  inertial Qian-Sheng model\\of liquid crystals}}}
\author{ Francesco De Anna$^1$ and Arghir Zarnescu$^{2,3,4}$ }
\date{\vspace{0.2cm}\small\today}
\begin{document}

\maketitle

\begin{abst}
	We consider the inertial Qian-Sheng model of liquid crystals which couples  a hyperbolic-type equation  involving a second-order material derivative with  a forced incompressible Navier-Stokes system. We study the energy law and prove a global well-posedness result. We further provide an example of twist-wave solutions, that is solutions of the coupled system for which the flow vanishes for all times.
\end{abst}
\blfootnote{\tiny{\\(1) Department of Mathematics, The Pennsylvania State University, University Park,
PA 16802, USA\, {\it Email: fzd16@psu.edu}\\
\hspace{1cm} (2) IKERBASQUE, Basque Foundation for Science, Maria Diaz de Haro 3,
48013, Bilbao, Bizkaia, Spain \\
(3) BCAM, Basque Center for Applied Mathematics, Mazarredo 14, 48009 Bilbao,  Spain\, {\it   Email: azarnescu@bcamath.org}\\
(4)``Simion Stoilow" Institute of Mathematics of the Romanian Academy, 21 Calea Grivitei, 010702 Bucharest, Romania}}

\section{Introduction}
The main aim of this article is to study a system describing the hydrodynamics of nematic liquid crystals in the Q-tensor framework (see for an introduction to the $Q$-tensor framework \cite{NewtonMottram},\cite{ZarIntro}) There exists several such models and we will consider the one proposed by T. Qian and P. Sheng in \cite{QianSheng}. As most tensorial models, this one provides  an extension of the classical Ericksen-Leslie model \cite{Leslie}, in particular capturing the biaxial alignment of the molecules, a feature not available in the classical Ericksen-Leslie model.

 Our main interest in this model is due to the fact that  it incorporates systematically a certain term that models inertial effects. Details about the physical relevance of this  will be provided in the  Subsection~\ref{subsec:physics}, below.

\bigskip 
 The inertial term  is usually neglected on physical grounds, a fact that is also convenient mathematically since keeping it generates considerable analytical and numerical challenges.  From a mathematical point of view the system couples a forced incompressible Navier-Stokes system,  modelling the flow, with a hyperbolic convection-diffusion system for matrix-valued functions that model the evolution of the orientations of the nematic molecules. The inertial term is responsible for the hyperbolic character of the equation describing the orientation of the molecules. This feature is also present in the Ericksen-Leslie model but it is usually neglected in the mathematical studies due to the formidable difficulties in treating it in the presence of the specific unit-length constraint. One can regard our study as a step towards the analytical understanding of the inertial Ericksen-Leslie model where one discards the unit-length constraint. 
 
\smallskip
In order to clearly describe the system it is convenient to introduce some terminology. The local orientation of the molecules is described through a function $Q$ taking values from $\Omega\subset \RR^d$, into the set of the so-called $d$-dimensional $Q$-tensors, that is symmetric and traceless $d\times d$ matrices:
\begin{equation*}
	S_0^{(d)}:=\left\{Q \in \R^{d\times d}; Q_{ij}=Q_{ji},\textrm{tr}(Q) = 0, i,j=1,\dots,d \right\}
\end{equation*}
The evolution of the $Q's$ is driven by the free energy of the molecules, as well as the transport, distortion and alignment effects caused by the flow.

\smallskip
The velocity of the centres of masses of molecules obeys a forced  incompressible Navier-Stokes system, with an additional stress tensor, a forcing term modelling the effect that the interaction of the molecules has on the dynamics of their centres of masses.
Explicitly the equations, in non-dimensional form, are:

\bea\label{eqinv:momentum+}
\dot v+\nabla p-\frac{\beta_4}{2} \Delta v=&\nabla\cdot \bigg(-L\nabla Q\odot \nabla Q+\beta_1 Q \trc\{Q A\}+\beta_5 AQ+\beta_6 QA\bigg)\non\\
&+\nabla\cdot\bigg(\frac{\mu_2}{2}(\dot Q-[\Omega, Q])+\mu_1\, \big[Q,(\dot Q-[\Omega, Q])\big]\bigg)
\eea 
\be\label{vdivfree}
\nabla\cdot v=0
\ee
\be\label{eqinv:tensors+}
J\ddot Q+\mu_1\dot Q=L\Delta Q-aQ+b(Q^2-\frac{1}{d}|Q|^2I_d)-cQ|Q|^2+\frac{\tilde\mu_2}{2} A+\mu_1 [\Omega,Q]
\ee where $\dot f=(\partial_t +v\cdot\nabla )f$ denotes a {\it material derivative} and for any two $d\times d$ matrices $M$, $N$, we denote their commutator as  $[M,N]:=MN-NM$. Furthermore, we denote $A_{ij}:=\frac{1}{2}(v_{i,j}+v_{j,i})$, $\Omega_{ij}:=\frac{1}{2}(v_{i,j}-v_{j,i})$, for $i,j=1,\dots,d$, $(\nabla Q\odot \nabla Q)_{ij}:= \sum_{k,l=1}^d Q_{kl,i}Q_{kl,j}$ (where for a scalar function $f$, we write  $f_{,j}$ for $\frac{\partial f}{\partial x_j}$) and $|Q|=\sqrt{\textrm{tr}(Q^2)}$. The $I_d$ denotes the $d\times d$ identity matrix.

The physical relevance of the equations and their meaning is provided in the next subsection, which can be skipped without impeding on the understanding of the remaining mathematical aspects of the paper.

\subsection{Physical aspects}\label{subsec:physics}
In the following we consider just the $d=3$ case ( out of which  one can reduce everything in a standard manner to the $d=2$ case) and take the domain $\Omega$ to be $\R^3$. The velocity $v$ of the centres of masses the molecules satisfies  a convection-diffusion fluid-type equation, with forcing provided by the pressure $p$, the distortion stress $\sigma$ and the viscous stress $\sigma'$ (here and in the following we use the Einstein summation convention, of summation over repeated indices):
\be\label{eqabst:momentum}
\dot v_i=(-p\delta_{ij}+\sigma_{ij}+\sigma'_{ij})_{,j},
\ee where $p$ is the pressure. 

The fluid is  taken to be incompressible so we have the divergence-free constraint:
\be\label{eq:incompres}
 v_{k,k}=0.
\ee

The distortion stress $\sigma$ is given by
\begin{equation*}
	\sigma_{ij}:=-\frac{\partial\mcF}{\partial( Q_{\alpha\beta,i})} Q_{\alpha\beta,j}
\end{equation*} where we use the simplest form of the Landau-de Gennes free energy density 
\begin{equation*}
	\mcF[Q]:=\frac{L}{2}|\nabla Q|^2+\psi_B(Q)
\end{equation*} modelling the spatial variations through the $\frac{L}{2}|\nabla Q|^2$ term with positive diffusion coefficient $L>0$, and  the nematic ordering  enforced through the ``bulk term" taken to be of the standard form \cite{NewtonMottram}

\be\label{def:psib}
\psi_B(Q)=\frac{a}{2}\tr(Q^2)-\frac{b}{3}\tr(Q^3)+\frac c4 (\tr(Q^2))^2.
\ee

The viscous stress $\sigma'$ is given by\footnote{ Note that in \cite{QianSheng}  the divergence of a matrix is taken along columns rather than rows as we do in here. However we changed everything consistently to fit our definition of matrix divergence}:
\begin{equation}
\begin{aligned}
\sigma'_{ij}:=\beta_1 Q_{ij}Q_{lk}A_{lk}+\beta_4 A_{ij}
								&+\beta_5 Q_{jl}A_{li}+\beta_6 Q_{il}A_{lj}\non\\
								&+\frac{1}{2}\mu_2\mcN_{ij}+\mu_1 Q_{il}\mcN_{lj}-\mu_1 \mcN_{il}Q_{lj},\label{defsigmaprime}
\end{aligned}  
\end{equation}
where $\beta_1,\beta_4,\beta_5,\beta_6$, $\mu_1$ and $\mu_2$ are viscosity coefficients, $A$ is the rate-of-strain tensor
defined by means of
\begin{equation*}
	A_{ij}\:=\frac{v_{i,j}+v_{j,i}}{2},
\end{equation*}
i.e. the symmetric part of the velocity gradient, and $\mcN$ stands for the co-rotational time flux of $Q$, whose $(i,j)$-th component is defined as follows
\begin{equation*}
	\mcN_{ij}:=
	\big(
		\dot Q - \omega \wedge Q + Q\wedge \omega
	\big)_{ij} =
	\delt Q_{ij}+v_kQ_{ij,k}-\eps_{ikl}\omega_k Q_{lj}-\eps_{jkl}\omega_k Q_{il}.
\end{equation*}
$\mcN$ represents the time rate of change of $Q_{ij}$ with respect to the background fluid angular velocity $\omega=\frac{1}{2}\nabla\times v$. Moreover, one can reformulate $\mcN$ making use of the vorticity tensor $\Omega$
\begin{equation*}
	\Omega_{ij}:=\frac{v_{i,j}-v_{j,i}}{2}.
\end{equation*}
Indeed, one can check that 
\begin{equation*}
	\mcN_{ij}=\big( \dot Q - [\Omega,Q]\big)_{ij} =\dot Q_{ij}-\Omega_{il}Q_{lj}+Q_{il}\Omega_{lj},
\end{equation*}
since we have $\omega\times u = \Omega u$, for any $d$-dimensional vector $u$.

\bigskip

\par   We will assume that the viscosity coefficients satisfy the following two constraints 

\begin{equation}\label{parodi-corot}
\begin{alignedat}{4}
	\beta_6\,&-\,\beta_5\,&&=\,\mu_2,\,\\
	\beta_5\,&+\,\beta_6\,&&=\,0.
\end{alignedat}
\end{equation}
For a better understanding of the relation between these conditions and other ones available in the  literature, it is worth making a comparison  between the stress tensor $\sigma$ given in \eqref{defsigmaprime} and the better-known Leslie stress tensor. Indeed whenever $Q$ is uniaxial (with unitary scalar order parameter, for simplicity) i.e $Q(t,x)\,=s_*(n(t,x)\otimes n(t,x)-\frac{1}{3})$ , with $n(t,x)\in\mathbb{S}^2$ the director field, $s_*\not=0$, the tensor $\sigma$ becomes 
the better-known Leslie stress tensor of the Ericksen-Leslie theory, up to some relations involving the viscosity coefficients (see \cite{QianSheng}).


\smallskip
The well-known Parodi's relation for the Leslie stress tensor  corresponds in our setting to
\begin{equation}
    \beta_6\,-\,\beta_5\,=\,\mu_2,
\end{equation}
namely the first identity of \eqref{parodi-corot}. 

\smallskip
The second condition in \eqref{parodi-corot} is not always satisfied by physical materials (though for some it is nearly satisfied such as for MBBA, see below) however it is sometimes assumed in the physics  literature in the more specialised form $\beta_5=\beta_6=0$ (see for instance \cite{popa2005waves}).

Moreover, we will need to assume that the Newtonian viscosity $\beta_4$ is large enough compared to the other remaining viscosities, in order to obtain the necessary energy dissipation.

\bigskip

The evolution of the order tensor $Q$ is driven by

\be\label{eqabst:tensors}
J\ddot Q_{ij}=h_{ij}+h'_{ij}-\lambda \delta_{ij}-\eps_{ijk} \lambda_k.
\ee
 where $\eps_{ijk}$, the Levi-Civita symbol. The $\lambda,\lambda_{k}$ are Lagrange multiplier enforcing the tracelessness and symmetry of the tensor and in our case they can be easily determined as $\lambda_k=0$ and
 $\lambda=-b\frac{|Q|^2}{3}I_3$ (with $I_3$ the $3\times 3$ identity matrix).

The elastic molecular field $h$ is
\begin{equation*}
	h_{ij}:=-\frac{\partial \mcF}{\partial Q_{ij}}+\bigg(\frac{\partial \mcF}{\partial(\partial_k Q_{ij,k})}\bigg)_{,k}
\end{equation*} and the viscous molecular field $h'$ is given by:
\be\label{def:hprime}
	h'_{ij}:=\frac 12\tilde\mu_2 A_{ij}-\mu_1 \mcN_{ij},
\ee

\bigskip

The definition of $\tilde \mu_2$ requires some clarifications. We note that in the paper \cite{QianSheng} of Qian and Sheng, the viscosity coefficient $\tilde\mu_2$ corresponds exactly to $\mu_2$  while other authors take it to be 
$\tilde\mu_2\,=\,-\mu_2$, see \cite{popa2003waves, popa2005waves}. The two different choices of the sign for $\tilde\mu_2$ 
provide intrinsical differences at the energy level, as it will be seen in Section~\ref{sec3}. We will see there that it would be more natural to assume $\tilde\mu_2\,=\,-\mu_2$, otherwise a new continuum variable
\begin{equation}\label{def:fakevar}
    \dot Q\,+\,[\Omega,\,Q]
\end{equation}
would effect the time-evolution of the flow. However, if we would take alternatively  $\tilde\mu_2\,=\,-\mu_2$ we would obtain  the classical co-rotational time flux $\mcN\,=\,\dot Q\,-\,[\Omega,\,Q]$ (instead of the above variable in \eqref{def:fakevar}).

We assume all the coefficients to be non-dimensional. For a common physical example, the MBBA material, we have the following relations between the coefficients \cite{SvensekZummer}:
\be\label{eq:coeffratios}
\frac{\mu_2}{\mu_1}\sim -1.92,\frac{\beta_1}{\mu_1}\sim 0.17, \frac{\beta_4}{\mu_1}\sim 0.7, \frac{\beta_5}{\mu_1}\sim 0.7, \frac{\beta_6}{\mu_1}\sim -0.79
\ee
Furthermore, because the coefficient $\beta_4$ corresponds to the standard Newtonian stress tensor we can assume
\be\label{eq:beta4}
\beta_4>0
\ee which fixes the signs for all the viscosities.

 The $J$ in \eqref{eqabst:tensors} stands for the inertial density and it is taken to be greater than $0$. This is consistent with the fact that $J$ has the same sign as the inertia in the Leslie-Ericksen type of model (see Appendix B in \cite{QianSheng}) where it is assumed to be positive (see for instance the assumption that J.L. Ericksen makes in \cite{Ericksentwist}).

 \bigskip The inertial term could conceivably play a role when the anisotropic axis is subjected to large accelerations, as motivated by F. Leslie (in the context of the director model) in \cite{leslieacceleration}.

 \smallskip
    Another interesting feature of the inertia is that it captures the wave-like phenomena, and one of the most mysterious and yet simple manifestation of these is related to the so-called twist-waves, introduced by J.L. Ericksen in \cite{Ericksentwist}. These are very special solutions of the coupled system, for which the flow vanishes for all time. The effect of the flow still remains on the $Q$-tensor part, by imposing an additional constraint, so these are very special solutions.


\bigskip
\subsection{Main results}

We  note first that the system admits a  Lyapunov-type functional, up to some relations on the viscosity coefficients. This functional includes the free energy due to the director field, the kinetic energy of the fluid and most importantly the rotational kinetic energy of the director field. 
{
\smallskip

\begin{theorem}\label{intro:thm1} {\bf [Energy law and apriori control of low-regularity norms]}

 We consider the system \eqref{eqinv:momentum+},\eqref{vdivfree},\eqref{eqinv:tensors+} in $\R^d$, $d=2$ or $d=3$.
	Let us assume that the viscosity coefficients fulfill 
	
	\be
	\beta_1,\beta_4,\mu_1\geq 0,
        \ee	and the inertia coefficient $J$ as well as the diffusion coefficient $L$ are positive.
 Furthermore, we assume:
        
        \begin{equation}\label{parodi-corot+}
\begin{alignedat}{4}
	\beta_6\,&-\,\beta_5\,&&=\,\mu_2,\,\\
	\beta_5\,&+\,\beta_6\,&&=\,0.
\end{alignedat}
\end{equation} 

Concerning $\tilde\mu_2$ we assume that:

\begin{equation}
\label{mu2cond}
\textrm{if }\tilde\mu_2=\mu_2\textrm{ then both of them are set to zero, i.e. }\tilde\mu_2=\mu_2=0.
\end{equation} 
        Moreover, in order to have the free energy of the molecules well defined we assume that the material coefficient $c$ satisfies
       
      \be
      c>0.
      \ee
        
        Then there exists a constant $C_d$ depending on $\tilde\mu_2,\beta_5,\beta_6,\mu_2$ such that if the Newtonian viscosity is large enough, i.e. $\beta_4>C_d$ then for classical solutions  that decay fast enough at infinity\footnote{sufficiently fast to be able to integrate by parts in the proof of the theorem, which happens for instance if they are in the function spaces \eqref{aprioribounds} }    the total  energy decays, i.e.

\be\label{energydecay}
\frac{d}{dt}\int_{\R^d} \frac{1}{2}
		\big(
			|v|^2+J|\dot Q|^2+\frac{L}{2}|\nabla Q|^2
		\big)+\psi_B(Q)\,\dd x\le 0
\ee

	 Furthermore:
	 
	 \begin{itemize}
	 \item If $d=2$ and $a\ge 0$ then $\psi_B(Q)\ge 0$ and for any $T>0$ we have the apriori bounds:	\begin{align}\label{aprioribounds}	
		v&\in L^\infty(0,T; L^2(\RR^d))\cap L^2(0,T;H^1(\RR^d)),\\
		Q&\in L^\infty(0,T;  H^1(\RR^d))\quad\text{with}\quad 	\dot Q \in L^\infty(0,T;L^2(\RR^d)),\non
	\end{align}
	\item If $d=2$ and $a< 0$, or $d=3$ (and $a$ arbitrary) then $\psi_B(Q)$ can be negative\footnote{ and thus the energy decay \eqref{energydecay} does not suffice for providing the apriori bounds \eqref{aprioribounds}}. Then there exists $\bar\mu_1=\bar\mu_1(a,b,c)$, $J_0:=J_0(\bar\mu_1,a,b,c)$,  and $\tilde C_d=\tilde C_d(\tilde\mu_2,\beta_5,\beta_6,\mu_2)>0$  such that if $J<J_0$, $\mu_1>\bar \mu_1$, $\beta_4>\tilde C_d$ then the apriori bounds \eqref{aprioribounds} hold.
	\end{itemize}
	
	\end{theorem}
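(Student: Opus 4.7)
The plan is to derive the energy identity by testing the momentum equation \eqref{eqinv:momentum+} against $v$ and the $Q$-equation \eqref{eqinv:tensors+} against $\dot Q$ (with the Frobenius inner product on tensors), then adding and integrating by parts. On the $v$-side, incompressibility disposes of the pressure and of the convective contribution in $\dot v$, and the Newtonian term yields the dissipation $\frac{\beta_4}{2}\int|\nabla v|^2$. On the $Q$-side, the inertial term $J\ddot Q:\dot Q$ produces $\frac{d}{dt}\int\frac{J}{2}|\dot Q|^2$ after using $\nabla\cdot v=0$ on the transport part of the double material derivative; the friction $\mu_1\dot Q:\dot Q$ gives the dissipation $\mu_1\int|\dot Q|^2$; and the elastic piece $L\Delta Q:\dot Q$ integrates by parts and, combined via the standard Ericksen-type cancellation with the distortion stress $-L\nabla Q\odot\nabla Q$ tested against $v$, reconstructs the $\frac{L}{4}|\nabla Q|^2$ contribution to the energy. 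The bulk nonlinearities, together with tracelessness of $\dot Q$ discarding the Lagrange-multiplier trace, recover $\frac{d}{dt}\int\psi_B(Q)$.

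With those terms in hand I would track the remaining viscous coupling terms. The $\beta_1$ contribution in the stress, tested against $v$, produces the manifestly non-negative $\beta_1\int|\mathrm{tr}(QA)|^2$. The $\beta_5,\beta_6$ stresses combined with the Parodi-type relations \eqref{parodi-corot+} produce indefinite terms of schematic form $|Q|\,|\nabla v|\,|\dot Q-[\Omega,Q]|$. The paired terms $\frac{\mu_2}{2}(\dot Q-[\Omega,Q])$ in the stress (tested against $v$) and $\frac{\tilde\mu_2}{2}A$ in the $Q$-equation (tested against $\dot Q$) combine into a bilinear expression in $(\mu_2-\tilde\mu_2)$ and in $\dot Q\pm[\Omega,Q]$, and the hypothesis \eqref{mu2cond} is precisely what kills the component in the non-dissipative quantity $\dot Q+[\Omega,Q]$ highlighted in \eqref{def:fakevar}. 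The commutator stresses $\mu_1[Q,(\dot Q-[\Omega,Q])]$ paired with $\mu_1[\Omega,Q]$ in the $Q$-equation cancel by antisymmetry of the commutator under the trace pairing. The indefinite cross-terms that remain are controlled via Cauchy-Schwarz in the Frobenius norm and Young's inequality, and absorbed into $\frac{\beta_4}{2}\int|\nabla v|^2$ at the price of a constant $C_d=C_d(\tilde\mu_2,\beta_5,\beta_6,\mu_2)$; choosing $\beta_4>C_d$ then yields \eqref{energydecay}.

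For the a priori bounds in the case $d=2$, $a\geq 0$, I would note that traceless symmetric $2\times 2$ matrices satisfy $\mathrm{tr}(Q^3)=0$ identically, so $\psi_B(Q)=\frac{a}{2}|Q|^2+\frac{c}{4}|Q|^4\geq 0$, and \eqref{energydecay} gives \eqref{aprioribounds} at once. In the other cases $\psi_B$ can be negative, so I would augment the energy by $\frac{M}{2}\int|Q|^2$, with $M$ chosen so that $\psi_B(Q)+\frac{M}{2}|Q|^2\geq 0$ pointwise (possible since $c>0$). Differentiating the addition in time yields $M\int Q:\dot Q$, the transport piece vanishing by $\nabla\cdot v=0$, and Cauchy-Schwarz bounds this by $\frac{\mu_1}{2}\int|\dot Q|^2+\frac{M^2}{2\mu_1}\int|Q|^2$. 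The first term is absorbed by the dissipation $\mu_1\int|\dot Q|^2$ once $\mu_1>\bar\mu_1(a,b,c)$, and the second is closed by a Gronwall argument against the augmented energy; the smallness $J<J_0$ is used to ensure that the inertial contribution $\frac{J}{2}\|\dot Q\|_{L^2}^2$ inside the augmented energy does not compromise the absorption constant, which is precisely the source of the stated thresholds on $J$ and $\mu_1$.

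I expect the main obstacle to be the careful bookkeeping of cancellations in the second step: verifying that under \eqref{mu2cond} every surviving indefinite contribution can indeed be expressed as a bilinear form in $\nabla v$ and in quantities already controlled by the dissipation $\mu_1|\dot Q|^2+\beta_4|\nabla v|^2$, and computing the sharp dimensional constant $C_d$. Once the energy identity is in its final clean form, both the direct $d=2$, $a\geq 0$ case and the augmented-energy Gronwall argument for the potentially negative-$\psi_B$ case are essentially routine.
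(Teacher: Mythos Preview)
Your plan for the energy identity is essentially the paper's, but one point is inaccurate: the $\mu_1$ commutator stress $\mu_1[Q,\mathcal N]$ paired with $\mu_1[\Omega,Q]$ in the $Q$-equation does \emph{not} cancel. What actually happens is that these terms, together with the bare friction $\mu_1\|\dot Q\|_{L^2}^2$, reorganise into the co-rotational dissipation $\mu_1\|\dot Q-[\Omega,Q]\|_{L^2}^2$; the paper verifies this by splitting $\nabla v=A+\Omega$ and using cyclicity of the trace. Similarly, under \eqref{parodi-corot+} the $\beta_5,\beta_6$ contributions do not produce terms of the form $|Q|\,|\nabla v|\,|\mathcal N|$; they collapse (with part of the $\mu_2$ term) to $\mu_2\int\mathrm{tr}\{A[\Omega,Q]\}$, and together with the $\tilde\mu_2$ piece the surviving indefinite term is exactly $\tilde\mu_2\int A:\mathcal N$ in the case $\tilde\mu_2=-\mu_2$ (and zero otherwise). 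This is what gets absorbed into $\beta_4\|\nabla v\|^2+\mu_1\|\mathcal N\|^2$.

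For the case where $\psi_B$ can be negative, your route (augment the energy by $\frac{M}{2}\|Q\|_{L^2}^2$, differentiate, bound $M\int Q:\dot Q$ by Young, Gronwall) is \emph{different} from the paper's. The paper instead tests the $Q$-equation against $Q$, which produces the identity $J\int\ddot Q:Q=\frac{J}{2}\frac{d}{dt}\int(|\dot Q+Q|^2-|\dot Q|^2-|Q|^2)-J\int|\dot Q|^2$ together with an extra $L\|\nabla Q\|_{L^2}^2$ dissipation and the quartic $-c\int|Q|^4$; combining this with the basic energy law yields a Gronwall inequality. It is precisely the spurious $-J\int|\dot Q|^2$ arising from $J\ddot Q:Q$ that forces $J<\mu_1$ (whence $J<J_0$), and the pointwise inequality $\bar\mu_1|Q|^2+4\psi_B(Q)\geq\varepsilon|Q|^2$ that forces $\mu_1>\bar\mu_1$. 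Your approach, by contrast, never generates such a term, and in fact closes for \emph{any} $J>0$ and $\mu_1>0$ (since $Q:[\Omega,Q]=0$, the cross term $M\int Q:\dot Q=M\int Q:\mathcal N$ is absorbed by any fixed fraction of $\mu_1\|\mathcal N\|^2$). So your justification for the thresholds $J<J_0$, $\mu_1>\bar\mu_1$ does not match your own argument: those constraints are artefacts of the paper's method, not yours.
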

}

\bigskip

{
The proof of Theorem \ref{intro:thm1} exhibits the main characteristic feature of the system, that is the mixing of terms that provide the most suitable cancellation of  ``extraneous'' maximal derivatives, i.e. the highest derivatives in $v$ that appear in the $Q$ equation and the highest derivatives in $Q$ that appear in the $v$ equation. 


\smallskip
It is important to observe that despite these apriori estimates, one cannot expect to construct weak solutions just by making use of this energy relation. Indeed, the most common approach in order to construct weak solutions is the compactness method, i.e. construct approximate solutions (satisfying  similar apriori bounds) and pass to the limit.  In the classical Navier-Stokes equation one needs to take care in dealing with  the nonlinear terms, but the apriori estimates available do provide enough control. However, things are much worse in our system \eqref{eqinv:momentum+}-\eqref{eqinv:tensors+}. The main difficulty  is inside the stress tensor $\sigma_{ij}$, more precisely in the nonlinear term 
\begin{equation}\label{intro:nablaQ2}
	(\nabla Q\odot \nabla Q)_{ij} := \sum_{\alpha,\beta=1}^d Q_{\alpha\beta, i}Q_{\alpha\beta, j}  
	\vspace{-0.1cm}
\end{equation}

Let us note that the estimates provided by the apriori bounds \eqref{aprioribounds} do not suffice for passing to the limit in the divergence of  \eqref{intro:nablaQ2}. This is to be contrasted with the case $J=0$. One should keep in mind that a positive inertial density $J$ leads the order tensor equation to be hyperbolic-like, in contrast to the parabolic structure that occurs when $J$ is neglected. In the parabolic setting one can make use of regularizing effects, achieving a control on two spatial derivatives of $Q$ (i.e. $\Delta Q$), which certainly allows to control the limit of a product as in the divergence of \eqref{intro:nablaQ2}. This feature is lost when $J$ is positive, so that constructing weak solutions would require a different approach than a rather common compactness one based on estimates \eqref{aprioribounds}.

\bigskip Thus one can attempt to construct strong solutions and it will turn out that this can be done. The most interesting aim is then to construct {\it global in time} solutions. We have been able to obtain them, using the one of  main features of the system namely the damping provided by the $\mu_1$ term. Indeed, if one formally takes the flow $v$ to be zero in \eqref{eqinv:tensors+} then  the material derivatives in  \eqref{eqinv:tensors+}  reduce to just time derivatives and the equation becomes a nonlinear damped wave equation. Morally speaking it will be this damping that is responsible for the global existence even in the case when the flow is present. Thus we have:  
\bigskip
\begin{theorem}\label{intro:thm2}
{\bf [Global existence and uniqueness for small initial data]}
	 
	  Consider the system \eqref{eqinv:momentum+},\eqref{vdivfree},\eqref{eqinv:tensors+}.We assume that $J<J_0$, $\mu_1>\bar \mu_1$, $\beta_4>\tilde C_d$  (where    $J_0:=J_0(\bar\mu_1,a,b,c)$, $\bar\mu_1=\bar\mu_1(a,b,c)$,  and $\tilde C_d=\tilde C_d(\tilde\mu_2,\beta_5,\beta_6,\mu_2)>0$ are explicitly computable coefficients). Furthermore  we assume the positivity of a number of coefficients: $\beta_1,\mu_1> 0$, $a>0$ and $J,L>0$.

	Let  $(v_0,\,Q_0): \RR^d \to \RR^d\times S_0^{(d)}$ be in  $H^s(\RR^d)\times H^{s+1}(\RR^d)$ with $s>\frac{d}{2}$ and $d=2$ or $d=3$. 
 Then there exists $\varepsilon_0>0$, depending on $s$ and $d$ such that  if 
		\begin{equation*}
		\eta_0:= \| v_0 \|_{H^s} + \| Q_0 \|_{H^{s+1}}+\|\dot Q_0\|_{H^s}<\varepsilon_0
	\end{equation*}
	then there exists a unique strong solution $(v,\,Q)$ of 
	\eqref{eqabst:momentum}-\eqref{eqabst:tensors}, which is global in time. Moreover
	there exists a positive constant $C$ (independent of the solution) such that
	\begin{align*}
		\| 			v \|_{L^\infty(\RR_+; H^s(\RR^d))		} + 
		\| \nabla 	v \|_{L^2(\RR_+;H^{s}(\RR^d))	}		 &+
		\| 			Q \|_{L^\infty(\RR_+; H^{s+1}(\RR^d))	} +
		\| 			Q \|_{L^2(\RR_+; H^{s+1}(\RR^d))			} +\\&+
		\|	\dot 	Q \|_{L^\infty(\RR_+; H^{s}(\RR^d))		} +	
		\|	\dot 	Q \|_{L^2(\RR_+; H^{s}(\RR^d))			} \leq C\eta_0.
	\end{align*}
\end{theorem}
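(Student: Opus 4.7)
The plan is to combine a local existence result in the space $H^s(\RR^d) \times H^{s+1}(\RR^d) \times H^s(\RR^d)$ for the triple $(v, Q, \dot Q)$ with a uniform-in-time a priori estimate at the same regularity, then invoke a standard continuation argument to obtain a global solution. Since $s > d/2$, the space $H^s$ is a Banach algebra, which handles the polynomial nonlinearities $aQ$, $b(Q^2 - |Q|^2 I_d/d)$, $cQ|Q|^2$, $Q\,\mathrm{tr}(QA)$, $[\Omega,Q]$, and $[Q, \dot Q - [\Omega,Q]]$ by Moser-type estimates. Local well-posedness itself follows from a Picard iteration set up with the Stokes semigroup for $v$ and the damped-wave semigroup for $Q$ associated with the linear operator $J\partial_t^2 + \mu_1 \partial_t - L\Delta + a$, closing in $C([0,T]; H^s)$ for $v$ and $C([0,T]; H^{s+1}) \cap C^1([0,T]; H^s)$ for $Q$, where $T > 0$ depends on the size of the initial data. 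The positivity assumption $a>0$ is used here to ensure coercivity of the linear part.

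For the global a priori bound, introduce the modified $H^s$-level energy
$$
\mathcal{E}_s(t) := \tfrac12 \|v\|_{H^s}^2 + \tfrac{J}{2}\|\dot Q\|_{H^s}^2 + \tfrac{L}{2}\|Q\|_{H^{s+1}}^2 + \tfrac{a}{2}\|Q\|_{H^s}^2 + \delta \sum_{|\alpha|\le s} \int_{\RR^d} \partial^\alpha Q : \partial^\alpha \dot Q \, \dd x,
$$
the last, cross term being the standard damped-wave trick producing the missing dissipation on $\|Q\|_{H^{s+1}}$ thanks to $L\Delta Q - aQ$ and the $\mu_1\dot Q$ damping. Applying $\partial^\alpha$ with $|\alpha|\le s$ to \eqref{eqinv:momentum+} and \eqref{eqinv:tensors+}, and testing respectively with $\partial^\alpha v$ and with $\partial^\alpha \dot Q + \delta\,\partial^\alpha Q$, the crucial input is that the highest-order couplings between the two equations — namely the stress contributions $-L\nabla Q\odot\nabla Q$, $\beta_1 Q\,\mathrm{tr}(QA)$, $\beta_5 AQ+\beta_6 QA$, $\tfrac{\mu_2}{2}(\dot Q-[\Omega,Q])$, and $\mu_1[Q,\dot Q-[\Omega,Q]]$ appearing in the $v$-equation versus the forcing $\tfrac{\tilde\mu_2}{2}A + \mu_1[\Omega,Q]$ in the $Q$-equation — cancel to leading order by the same integrations by parts used in the proof of Theorem~\ref{intro:thm1}, up to lower-order remainders controlled via Kato-Ponce and Moser commutator estimates. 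This yields a differential inequality
$$
\frac{d}{dt}\mathcal{E}_s + c_0\,\mathcal{D}_s \leq C\,\mathcal{E}_s^{1/2}\,(\mathcal{E}_s + \mathcal{D}_s),
$$
with $\mathcal{D}_s \simeq \beta_4\|\nabla v\|_{H^s}^2 + \mu_1\|\dot Q\|_{H^s}^2 + \delta L\|Q\|_{H^{s+1}}^2 + \delta a\|Q\|_{H^s}^2$; the coercivity of $\mathcal{D}_s$ relies on the smallness of $J$ together with the largeness of $\mu_1$ and $\beta_4$ as in Theorem~\ref{intro:thm1}.

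A standard continuation argument then closes the proof: if $\eta_0 < \varepsilon_0$ with $\varepsilon_0$ small enough relative to $c_0/C$, the above inequality gives $\mathcal{E}_s(t) \lesssim \eta_0^2$ and $\int_0^t \mathcal{D}_s\,\dd\tau \lesssim \eta_0^2$ throughout the maximal interval of existence, which combined with the local result extends the solution globally and yields the stated bound. Uniqueness follows from a Gronwall estimate for the difference of two solutions in a lower-order norm (e.g.\ $L^2\times H^1\times L^2$), exploiting the same cancellation structure. The main obstacle lies in implementing the structural cancellations at the $H^s$ level: at $L^2$ they are exact integrations by parts, but after differentiating $s$ times one must track commutators $[\partial^\alpha, Q]$ and $[\partial^\alpha, v\cdot\nabla]$, and — most delicately — the second material derivative, which expands as $\ddot Q = \partial_t^2 Q + 2 v\cdot\nabla\partial_t Q + (\partial_t v)\cdot\nabla Q + v\cdot\nabla(v\cdot\nabla Q)$ and mixes top derivatives of $v$ and $Q$ in a way that requires careful use of the divergence-free constraint together with the $\mu_1\dot Q$ damping to absorb the potentially derivative-losing contributions.
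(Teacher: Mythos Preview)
Your overall strategy is correct and matches the paper: a damped-wave cross term to generate dissipation in $\|Q\|_{H^{s+1}}$, the same $L^2$-level cancellation between $-L\nabla Q\odot\nabla Q$ and $L\Delta Q$ tested against $v\cdot\nabla Q$, commutator estimates for the higher-order remainders, and uniqueness at the $L^2$ level. Two technical points are worth flagging.

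First, the paper works with the decomposition $\langle\cdot,\cdot\rangle_{H^s}=\langle\cdot,\cdot\rangle_{L^2}+\langle\cdot,\cdot\rangle_{\dot H^s}$ and the fractional commutator estimate $\|[(\sqrt{-\Delta})^s,v\cdot\nabla]B\|_{L^2}\lesssim\|\nabla v\|_{H^s}\|B\|_{H^s}$ of Fefferman--McCormick--Robinson--Rodrigo, rather than integer $\partial^\alpha$ and Kato--Ponce; this covers non-integer $s>d/2$ directly and lets the exact cancellation $\langle\nabla Q\odot\nabla Q,\nabla v\rangle_{L^2}+\langle v\cdot\nabla Q,\Delta Q\rangle_{L^2}=0$ be isolated cleanly, with the $\dot H^s$ part estimated as a cubic remainder. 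For the construction the paper uses a Friedrichs frequency truncation rather than Picard iteration, precisely because the truncated system inherits the same cancellation structure and hence the same a priori bound.

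Second, your stated differential inequality $\mathcal E_s'+c_0\mathcal D_s\le C\mathcal E_s^{1/2}(\mathcal E_s+\mathcal D_s)$ does \emph{not} close globally as written: after absorbing $C\mathcal E_s^{1/2}\mathcal D_s$ you are left with $\mathcal E_s'\le C\mathcal E_s^{3/2}$, and since $\mathcal D_s$ does not dominate $\|v\|_{H^s}^2$ this gives only exponential growth under a bootstrap. The paper's form is $\Phi'+\Psi\lesssim\Phi\Psi$ with $\Psi$ containing $\|\nabla v\|_{H^s}^2+\|\dot Q\|_{H^s}^2+\|Q\|_{H^{s+1}}^2$; the point is that \emph{every} nonlinear remainder carries at least one factor from $\Psi$ (this is where $a>0$ is used, so that $\|Q\|_{H^s}^2$ sits in the dissipation). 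Once you have $\Phi'+\Psi\lesssim\Phi\Psi$, smallness of $\Phi(0)$ gives $\Phi$ non-increasing and $\int_0^\infty\Psi<\infty$ directly. You should check that your estimates actually produce this sharper form.
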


One important assumption in the above theorem is that the coefficient $a>0$. This captures a regime  of physical interest but unfortunately  not the most interesting physical regime (which would be for $a\le 0$ the ``deep nematic" regime, see \cite{NewtonMottram}). Technically this assumption that $a>0$ provides a sort of additional {\it ``damping in time"} that was used previously in  related settings  in \cite{Xiaoglobal}, respectively \cite{KWZ}.

\bigskip
The  difficulties associated with treating the system \eqref{eqinv:momentum+},\eqref{vdivfree},\eqref{eqinv:tensors+} are generated, as usually (in this type of non-Newtonian fluid) by the forcing term in the Navier-Stokes part. One can essentially think of the system as a highly non-trivial perturbation of a Navier-Stokes system. 

\smallskip
However the specific difficulty in our system is that the forcing term involves $Q$ whose equation is now of hyperbolic nature. This is due to the inertial term that contains {\it a second order material derivative}. We are not aware of any systematic treatment of such a term in other contexts, but it turns out that the most delicate part of the whole proof is related to its treatment. Indeed, one should start by noticing that this is very far from the case when $v=0$ (when it is just $\partial^2_t Q$) because it is a highly nonlinear operator, for instance for $v$ and $Q$ smooth we explicitly have:

\be
\ddot Q=\partial^2_t Q+2(v\cdot\nabla)\partial_t Q+(v_t\cdot\nabla) Q+\left((v\cdot\nabla)v\cdot \nabla\right)Q+v\nabla^2 Q v 
\ee involving an {\it ``expensive derivative of v"} i.e. $\partial_t v$ and the term $v\nabla^2 Q v$ that competes in a sense with the regularizing Laplacian $L\Delta Q$ on the right hand side of the $Q$-equation.

Our main trick in dealing with the double material derivative has been to stay as close as possible to the standard cancellation appearing in the context of convective derivatives, which can be formally written as $\int_{\R^d}\left( f\cdot\nabla \right) v f\,dx=0$ for $f$ decaying sufficiently fast at infinity. However, in order to implement this we have to use a higher-order commutator estimate that appears in \cite{commutator}, an estimate which is at the level of homogeneous Sobolev spaces, $\dot H^s$. This is very convenient for our purposes because  the $H^s(\RR^d)$ norm does not allow the cancellation of the worst terms, as in obtaining  \eqref{energydecay} in the $L^2(\RR^d)$-setting. This difficulty is partially dealt with by reformulating the inner product of $H^s(\RR^d)$ into\vspace{-0.3cm}
\begin{equation*}
	\langle \omega_1, \omega_2 \rangle_{L^2(\RR^2)} + \langle \omega_1, \omega_2 \rangle_{\dot H^s(\RR^2)}
	=\int_{\RR^d_\xi}\big( 1+ |\xi|^{2s}\big)\hat \omega_1(\xi)\hat \omega_2 (\xi)\dd \xi,\vspace{-0.3cm}
\end{equation*}
where $\dot H^s(\RR^d)$ stands for the homogeneous Sobolev space with index $s$. It is straightforward that this inner product generates the same topology in $H^s(\RR^d)$ with respect to the common one.


Our main work on proving the existence of classical solutions is to obtain an uniform estimate for our approximate solutions, that is to close an estimate of the type:
\begin{equation}\label{intro:estimate_existence}
	\Phi'(t) +  \Psi (t) \leq  C \Phi(t)\Psi(t),
\end{equation}
where $C$ is a suitable positive constant, $\Phi$ is controlling the  $H^s$-norms in space for our solution and $\Psi$ is an  integrable-in-time quantity involving $H^s$-norms. Then, a rather standard argument (see for instance in the Appendix, Lemma~\ref{appx:lemma1}) allows to propagate the smallness condition on the initial data (i.e. on $\Phi(0)$). This leads the right-hand side of the above equation to be absorbed by the left-hand side, achieving the cited uniform estimates. Finally we construct our classical solution, through a compactness method.

\smallskip
The uniqueness of our solutions is proven by  evaluating the difference between two solutions at a regularity level $s=0$, i.e. in $L^2(\RR^2)$. Our work is mainly to obtain an estimate that leads to the Gronwall lemma. Here the main difficulties are handled taking into account a specific feature of the coupling system related to the difference of the two solutions. This feature allows the cancellation of the worst term when considering certain physically meaningful combination of terms.

It is perhaps interesting to remark that in Theorem \ref{intro:thm2} we do not need consider a positive constant $c$ in the bulk free energy density $\psi_B(Q)$. Usually, this is a necessary condition in order to have $\psi_B(Q)$ bounded from below the space of $Q$-tensors\footnote{ also $c>0$ is necessary for well-posedness for large data, as can be seen by looking at the $Q$ equation where we take $u=0$ and $J=0$}. However we do not need this restriction on $c$ mainly because we are assuming a smallness condition on the initial data, smallness which will be propagated by the equation. 


\bigskip Finally, one last issue of interest to us are the so-called {\it ``twist waves"}. These are  solutions of the coupled system, for which the flow $v$ is identically zero. The existence of such solutions for the Ericksen-Leslie system was first postulated by J.L. Ericksen in \cite{Ericksentwist}, who named them ``twist waves" and provided one explicit example. 

We note that if $v=0$, the $Q$-tensor evolution \eqref{eqinv:tensors+} reduces to a nonlinear wave system:

\be\label{eq:Qwave0}
JQ_{tt}+\mu_1 Q_t=L\Delta Q-aQ+b(Q^2-\frac{|Q|^2}{3}I_3)-cQ|Q|^2
\ee

Nevertheless, there is a part of the momentum equation \eqref{eqinv:momentum+} that survives as an additional constraint on $Q$, namely:

\be\label{eq:Qtwistconst0}
\nabla p=\nabla\cdot \bigg(-\nabla Q\otimes \nabla Q+\frac{\mu_2}{2}Q_t+\mu_1\, \big[Q,Q_t\big]\bigg)
\ee 
Clearly, because of this additional constraint, only very special types of initial data for \eqref{eq:Qwave0} will generate solutions that respect the constraint \eqref{eq:Qtwistconst0}. One example is obtained by taking in $\R^d$ with $d=2$ or $d=3$ the ansatz: 

$$T(t,x):=f(t,|x|)\bar H(x)$$ with $f:\R_+\times \R\to \R$ a function to be determined and $\bar H$ the ``hedgehog" function (see \cite{meltinghedgehog} for details about its physical significance) :

$$\bar H_{ij}(x):=\frac{x_ix_j}{|x|^2}-\frac{\delta_{ij}}{d}, i,j=1,\dots,d$$ 

Then the \eqref{eq:Qwave0} reduces to:

\be\label{eq:s0}
Jf_{tt}+\mu_1 f_t=L\left(f_{rr}+f_r\frac{d-1}{r}-\frac{2d}{r^2}f\right)-af+\frac{b(d-2)}{d}f^2-\frac{c(d-1)}{d}f^3
\ee

We can then show:

\begin{proposition}\label{prop:meltingwave} 
Let $f_0:\R\to\R$ be a smooth  function such that 

$$f(0)=f_r(0)=0.$$

 Let $d=2$ and assume that  in \eqref{eq:s0} we have $J,L,c,\mu_1>0$. Then there exists a function $f:\R_+\times \R \to\R$, that is $C^2$ on $(0,\infty)\times (0,\infty)$, and such that both $f$ and $f_t$ are smooth functions of $r$ on $[0,\infty)$,  solution of the equation \eqref{eq:s0} witht $f(0,r)=f_0(r)$ so that the function $T(t,x)=f(t,|x|)\bar H(x)$ is a smooth twist wave in $\R^d$, i.e. a classical solution of \eqref{eq:Qwave0} satisfying the constraints  \eqref{eq:Qtwistconst0}.

\end{proposition}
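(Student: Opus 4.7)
My plan is to recognise that the hedgehog ansatz reduces the full tensorial wave equation \eqref{eq:Qwave0} to a standard defocusing cubic damped wave equation on $\RR^2$, to invoke global smooth solvability there via a standard energy argument, and then to recover the scalar profile $f$ from the tensor solution by exploiting $O(2)$-equivariance. The pressure constraint \eqref{eq:Qtwistconst0} is then checked by explicit calculation.

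\smallskip
\textbf{Reduction to a clean PDE on $\RR^2$.} For $d=2$ one has the pointwise identities $|\bar H|^2 = 1/2$, $\bar H^2 = I_2/4$, $\Delta \bar H = -(4/|x|^2)\bar H$, and moreover $\nabla f(|x|)\cdot \nabla \bar H = 0$ whenever $f$ is radial. Substituting $T(t,x) = f(t,|x|)\bar H(x)$ into \eqref{eq:Qwave0} the quadratic term vanishes (because $\bar H^2 - |\bar H|^2 I_2/2 = 0$), the cubic term collapses to $(c/2)f^3 \bar H$, and the Laplacian is $(f_{rr}+r^{-1}f_r - 4r^{-2}f)\bar H$. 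Hence \eqref{eq:Qwave0} is equivalent to \eqref{eq:s0}, and is itself the closed semilinear damped wave equation
\begin{equation*}
J T_{tt} + \mu_1 T_t \;=\; L\Delta T - aT - c\,|T|^2 T
\end{equation*}
on $(0,\infty)\times \RR^2$, with data $T(0,\cdot) = f_0(|\cdot|)\bar H(\cdot)$ and, say, $T_t(0,\cdot) = 0$.

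\smallskip
\textbf{Global solvability and extraction of $f$.} The natural energy
\begin{equation*}
\mathcal E(t) \;=\; \int_{\RR^2} \Bigl( \tfrac{J}{2}|T_t|^2 + \tfrac{L}{2}|\nabla T|^2 + \tfrac{a}{2}|T|^2 + \tfrac{c}{4}|T|^4 \Bigr)\,dx
\end{equation*}
satisfies $\mathcal E'(t) = -\mu_1 \int_{\RR^2}|T_t|^2\,dx \leq 0$; since $c>0$, $\mathcal E$ controls $\|\nabla T\|_{L^2}^2$ and $\|T\|_{L^4}^4$ (a possibly negative $a$ is absorbed by $L^2$--$L^4$ interpolation and Gronwall). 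Because a cubic nonlinearity is energy-subcritical in two space dimensions, this bound combined with Moser-type product estimates on $|T|^2 T$ propagates arbitrarily high Sobolev regularity and yields a global classical solution for smooth enough data. Both the equation and the initial data are invariant under the action $T(t,x)\mapsto R\,T(t,R^\top x)\,R^\top$ for $R\in O(2)$ (using $R\bar H(R^\top x)R^\top = \bar H(x)$), so by uniqueness the solution retains this invariance; a symmetric traceless $2\times 2$ tensor field with that property is necessarily of the form $f(t,|x|)\bar H(x)$, which defines $f$. Smoothness of $T$ away from the origin gives $f\in C^\infty((0,\infty)\times(0,\infty))$, while smoothness of $T$ at $x=0$ forces $f(t,0) = f_r(t,0) = 0$ for all $t$ and yields the required $r$-smoothness of $f$ and $f_t$ up to $r=0$.

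\smallskip
\textbf{The pressure constraint, and the main obstacle.} Since $Q$ and $Q_t$ are both proportional to $\bar H$, one has $[Q,Q_t] = ff_t[\bar H,\bar H] = 0$. A direct computation yields
\begin{equation*}
\bigl(\nabla\cdot(f_t \bar H)\bigr)_i \;=\; \tfrac{x_i}{|x|^2}\Bigl( f_t + \tfrac{|x|}{2}(f_t)_r\Bigr),
\end{equation*}
which is a radial vector and hence equals $\nabla G_1$ for a suitable radial $G_1$. Using the identity $\nabla\cdot(\nabla Q\odot\nabla Q) = \tfrac{1}{2}\nabla |\nabla Q|^2 + \nabla Q\cdot \Delta Q$, both summands are radial (the first because $|\nabla Q|^2$ is radial, the second because $\Delta Q$ is again of the form (radial)$\cdot\bar H$), so their sum is $\nabla G_2$, and $p := (\mu_2/2) G_1 - G_2$ realises \eqref{eq:Qtwistconst0}. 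The most delicate point is the behaviour at $r=0$, where \eqref{eq:s0} has the singular coefficient $-4/r^2$; the reformulation on $\RR^2$ bypasses this entirely, the hypothesis $f_0(0) = f_0'(0) = 0$ being exactly what makes $T_0 = f_0(|x|)\bar H(x)$ regular at the origin, and the $O(2)$-equivariance of the 2D equation propagating this regularity in time.
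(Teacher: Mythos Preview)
Your proof is correct and takes a genuinely different route from the paper's. The paper proceeds in four separate steps: it first proves global $H^s$ wellposedness of the tensor equation \eqref{eq:Qwave0} on $\RR^2$, then establishes a weak--strong uniqueness principle for \eqref{eq:Qwave0}, then separately constructs global \emph{weak} solutions of the radial $f$-equation \eqref{eq:s0} in weighted spaces $L^2(\RR, r^2\,dr)$, and finally identifies the weak solution $f(t,|x|)\bar H(x)$ with the strong tensor solution via weak--strong uniqueness, reading the smoothness of $f$ off a suitable matrix component of $T$ restricted to a line.

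Your argument short-circuits this by working entirely at the level of the tensor equation and never analysing \eqref{eq:s0} directly: you observe that for $d=2$ the quadratic term vanishes identically (since $Q^2-\tfrac{1}{2}|Q|^2 I_2=0$ for any traceless symmetric $2\times 2$ matrix), solve the resulting defocusing cubic damped wave equation globally, and then use $O(2)$-equivariance together with uniqueness of strong solutions to force the solution into the form $f(t,|x|)\bar H(x)$. This is more economical; the paper's detour through weak solutions of the one-dimensional $f$-equation and weak--strong uniqueness is replaced by a clean symmetry argument. Two points deserve a little more care, though both are recoverable: the claim that $O(2)$-equivariance forces the hedgehog form genuinely requires the reflections (rotations alone leave a second radial profile $b(r)$ undetermined), and your statement that $f_0(0)=f_0'(0)=0$ is ``exactly'' what makes $T_0$ smooth at the origin is optimistic---those two conditions give only finite differentiability, and arbitrary $H^s$ regularity of $T_0$ needs correspondingly many odd derivatives of $f_0$ to vanish at $0$. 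The paper glosses over the same issue, so this is a shared informality rather than a defect of your approach.
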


\begin{remark}
A similar statement can be made for $d=3$ but for technical reasons we are unable to show the global existence of the twist-wave in this case, but only the local existence in time.
\end{remark}
}
{\noindent {\textbf{Organization of the paper}}

\smallskip
This paper is organised as follows: in the next section we prove Theorem \ref{intro:thm1} concerning the energy law, in Section~\ref{sec2_1/2} we present  apriori estimates for higher norms and prove the global existence result, namely  Theorem \ref{intro:thm2}. Finally in  Section~\ref{sec4} we construct a specific twist-wave solution, providing the proof of Proposition~\ref{prop:meltingwave} . A number of natural open problems are proposed and discussed in Section~\ref{sec:openprob}.

\bigskip {\bf\noindent Notations and conventions} 

We denote by $\dot f$ the material derivative $\dot f=\delt h+v\cdot f$, where the fluid velocity $v$ is understood from the context. We also use the Einstein summation convention, that is summation over repeated indices. We denote weighted spaces, by specifing the weighted measure, for instance $L^2(\R,r^2dr)=\{f:\R\to\R, \int_{\R} f^2r^2\,dr<\infty\}$. We also use the notation $\|(f,g)\|_X=\|f\|_X+\|g\|_X$ for any elements $f,g\in X$ with $X$ a suitable normed space with norm $\|\cdot\|_X$. We denote by $\mathcal{F}$ and $\mathcal{F}^{-1}$ respectively the Fourier transform and its inverse.

We use $\eps_{ijk}$, the Levi-Civita symbol, with indices $i,j,k$ from $1$ to $d$, and the comma in the subscript denotes derivative with respect to particular spatial coordinate. If $M(x)$ is a $d\times d$-matrix, then $\nabla \cdot M$ stands for the vector field $(M_{ij,j})_{i=1\dots,d}$. The $|M|$ denotes the Frobenius norm of the matrix, i.e. $|M|=\sqrt{MM^t}$.

The $[\cdot,\cdot]$ stands for the usual commutator bracket $[A,B]:=AB-BA$, for any $d\times d$-matrices $A$ and $B$. We denote $=\tr(AB)$ with $A:B$. The product $\nabla A\otimes \nabla B$ is a matrix with $ij$ component $\left(\partial_i A: \partial_j B\right)$.

\section{Energy  law and apriori bounds}\label{sec3}





\smallskip
{\bf Proof of Theorem~\ref{intro:thm1}:} We multiply the equation \eqref{eqinv:momentum+} by $v_i$ integrate over the space and by parts\footnote{without boundary terms, thanks to our assumptions} and use \eqref{vdivfree} to cancel some terms.  To the result obtained we add  equation \eqref{eqinv:tensors+} multiplied by $\dot Q_{ij}$, integrated over the space and by parts \footnote{again without boundary terms thanks to our assumptions} to obtain\footnote{ where $\sigma'$ was defined in \eqref{defsigmaprime}, $h'$ in \eqref{def:hprime} and $\psi_B$ in \eqref{def:psib}. The operator $\mathcal{L}$ denotes the projection onto trace-free matrices.}:

\bea
\frac{d}{dt}\int_{\R^d} \frac{1}{2}\big(|v|^2+J|\dot Q|^2\big)\,dx&=\int_{\R^d} LQ_{kl,j}Q_{kl,i}v_{i,j}+\left(L\Delta Q_{ij}-\mcL\frac{\partial \psi_B}{\partial Q}_{ij}
\right)\left(\delt Q_{ij}+v\cdot\nabla Q_{ij}\right)\,dx\non\\
&-\int_{\R^d} \sigma'_{ij}v_{i,j}\,dx+\int_{\R^d} h'_{ij}\dot Q_{ij}\,dx\non\\
&=L\underbrace{\int_{\R^d} Q_{kl,j}Q_{kl,i}v_{i,j}+\Delta Q_{ij}v_kQ_{ij,k}\,dx}_{:=\mcI_1}-L\underbrace{\int_{\R^d}\delt Q_{ij,k}Q_{ij,k}}_{:=\mcI_2}\non\\
&-\underbrace{\int_{\R^d} \delt Q_{ij} \frac{\partial \psi_B}{\partial Q}_{ij}\,dx}_{:=\mcI_3}-\underbrace{\int_{\R^d} v\cdot\nabla Q_{ij}\frac{\partial \psi_B}{\partial Q}_{ij}}_{:=\mcI_4}\non\\
&- \underbrace{\beta_4 \int_{\R^d}A_{ij}v_{i,j}\,dx}_{:=\mcI_5}-\beta_1\int_{\R^d} Q_{ij}Q_{lk}A_{lk}v_{i,j}\,dx\non\\
&-\beta_5 \int_{\R^d}A_{il}Q_{lj}v_{i,j}\,dx-\beta_6\int_{\R^d} Q_{il}A_{lj}v_{i,j}\,dx\non\\
&-\frac{1}{2}\mu_2\int_{\R^d}\left(\dot Q_{ij}-\Omega_{ik}Q_{kj}+Q_{ik}\Omega_{kj}\right)v_{i,j}\,dx\non\\
&-\mu_1 \int_{\R^d}\Big( Q_{il} \dot Q_{lj} - \dot Q_{il} Q_{lj}\Big)v_{i,j} dx\non\\
&+\mu_1 \int_{\R^d} \left(Q_{il}[\Omega,\,Q]_{lj}-[\Omega,\,Q]_{il}Q_{lj}\right)v_{i,j}\,dx\non\\
&+\frac{\tilde\mu_2}{2}\int_{\R^d} A_{ij}\dot Q_{ij}-\mu_1 \int_{\R^d}\left(\dot Q_{ij}-\Omega_{ik}Q_{kj}+Q_{ik}\Omega_{kj} \right) \dot Q_{ij}\,dx
\eea 

Noting that thanks to  \eqref{vdivfree} we have $\mcI_1=\mcI_4=0$ and moving $\mcI_2,\mcI_3,\mcI_5$ on the left hand size, the last relation becomes 

\begin{equation}
\begin{aligned}
	\frac{d}{dt}\int_{\R^d} &\frac{1}{2}
		\big(
			|v|^2+J|\dot Q|^2+L|\nabla Q|^2
		\big)+\psi_B(Q)\,dx	
	+\frac{\beta_4}{2}	\int_{\R^d}	|\nabla v|^2\,dx				+\mu_1	\int_{\R^d} |\dot Q|^2\,dx															\\
	&=	-\beta_1\int_{\R^d} Q_{ij}Q_{lk}A_{lk}v_{i,j}\,dx			-\beta_5 \int_{\R^d}A_{il}Q_{lj}v_{i,j}\,dx
		-\beta_6\int_{\R^d} Q_{il}A_{lj}v_{i,j}\,dx																												\\
	&\quad-\frac{1}{2}\mu_2\int_{\R^d}\left(\dot Q_{ij}-\Omega_{ik}Q_{kj}+Q_{ik}\Omega_{kj}\right)v_{i,j}
	+\frac{\tilde\mu_2}{2}\int_{\R^d} A_{ij}\dot Q_{ij}\\
	&\quad+\mu_1 \int_{\R^d}\Big( \dot Q_{il}Q_{lj}  - Q_{il}\dot Q_{lj} \Big)v_{i,j} dx
	+\mu_1 \int_{\R^d}\left(\Omega_{ik}Q_{kj}-Q_{ik}\Omega_{kj} \right) \dot Q_{ij}\,dx\\
	&\quad-\mu_1 \int_{\R^d} \left([\Omega,\,Q]_{il}Q_{lj}-Q_{il}[\Omega,\,Q]_{lj}\right)v_{i,j}\,dx\,dx
\end{aligned}
\end{equation}
Now we analyse each term on the right-hand side of the equality, and we will repeatedly use that $v_{i,j} = A_{ij} + \Omega_{ij}$ and moreover that $\trc\{ BC \}=B:C$ is null for any $B$ symmetric and $C$ skew-adjoint.  We begin with
\begin{align*}
	-\beta_1 \int_{\R^d} Q_{ij}Q_{lk}A_{lk}v_{i,j} 	&= 	-\beta_1 \int_{\R^d} Q_{ij}Q_{lk}A_{lk}A_{ij} 	- \beta_1 \int_{\R^d} Q_{ij}Q_{lk}A_{lk}\Omega_{ij}\\
													&=	-\beta_1 \int_{\R^d} \left(Q:A\right)^2  			- \beta_1 \int_{\R^d} (Q:\Omega)(Q:A)\\
													&=	-\beta_1 \int_{\R^d} \left(Q:A\right)^2,
\end{align*}
observing that $Q:\Omega=0$. 
Furthermore we have:
\begin{align*}
	&-\mu_1 \int_{\R^d}\Big( Q_{il} \dot Q_{lj} - \dot Q_{il} Q_{lj}\Big)v_{i,j} dx
	+\mu_1 \int_{\R^d}\left(\Omega_{ik}Q_{kj}-Q_{ik}\Omega_{kj} \right) \dot Q_{ij}\,dx=\\
	&=\underbrace{\mu_1\int_{\R^d} \trc\{(Q\dot Q -\dot Q  Q)A\}}_{=0}+\mu_1\int_{\R^d} \trc\{(Q\dot Q - \dot Q  Q)\Omega\}
	+\mu_1\int_{\R^d}\trc\{(\Omega Q -Q \Omega)\dot Q\}=\\
	&=2\mu_1\int_{\R^d}\trc\{[\Omega,\,Q]\dot Q\}.
\end{align*}Finally
\begin{align*}
	&-\mu_1 \int_{\R^d} \left([\Omega,\,Q]_{il}Q_{lj}-Q_{il}[\Omega,\,Q]_{lj}\right)v_{i,j} =
	\mu_1 \int_{\R^d}\trc\bigg\{\big([\Omega,\,Q]Q- Q[\Omega,\,Q]\big)\Omega\bigg\}=\\
	&-\mu_1 \int_{\R^d}\trc\big\{(\Omega Q -Q\Omega) [\Omega,\,Q]\big\}=
	-\mu_1 \int_{\R^d}|[\Omega,\,Q]|^2.
\end{align*}

Now we deal with
\begin{align*}
	-\beta_5 &\int_{\R^d}A_{il}Q_{lj}v_{i,j}\,dx	- \beta_6\int_{\R^d} Q_{il}A_{lj}v_{i,j}\,dx	
																			+\frac{\mu_2}{2}\int_{\R^d}\big(\Omega_{ik}Q_{kj}-Q_{ik}\Omega_{kj}\big)v_{i,j}\,dx = 					\\
	&= -\beta_5 \int_{\R^d } \trc\{ QA\nabla v \}\, -\, \beta_6 \int_{\R^d } \trc\{ AQ\nabla v \} \,+\, \frac{\mu_2}{2}\int_{\R^d}\trc\{ [\Omega,\,Q]\,\nabla v \}					\\
	 &= -\beta_5 \int_{\R^d } \trc\{ (QA+AQ)\nabla v \}\,- \,(\beta_6 - \beta_5)\int_{\R^d } \trc\{ AQ \nabla v \}	\,+\,\frac{\mu_2}{2}\int_{\R^d}\trc\{ A\,[\Omega,\,Q] \}			\\
	&= -\beta_5 \int_{\R^d } \trc\{ (QA+AQ)A \}\, -\, (\beta_6-\beta_5)\int_{\R^d } \trc\{ AQ (A+\Omega) \} \,+\,\frac{\mu_2}{2} \int_{\R^d}\trc\{ A\,[\Omega,\,Q]\}				\\
	&= -2\beta_5 \int_{\R^d } \trc\{ AQA\}-(\beta_6-\beta_5)\int_{\R^d } \trc\{ AQ A \}\,+\,
	        \left(\,
	            \frac{\beta_6-\beta_5}{2}\,+\,\frac{\mu_2}{2} \,
	        \right)
	        \int_{\R^d} \trc\{A[\Omega,\,Q]\}\\
	&= \,-\, (\beta_5+\beta_6)\int_{\R^d}\trc\{ AQA\}\,+\,\mu_2\int_{\R^d} \trc\{A[\Omega,\,Q]\}.
\end{align*}

We arrange  the remaining terms related to $\mu_2,\tilde\mu_2$ as
\begin{align*}
	-\frac{1}{2}\mu_2\int_{\R^d}\dot Q_{ij}v_{i,j}
	+\frac{\tilde\mu_2}{2}\int_{\R^d} A_{ij}\dot Q_{ij}
	&= -\frac{\mu_2}{2}\int_{\R^d} \dot Q:\nabla v 
	+\frac{\tilde\mu_2}{2}\int_{\R^d}A:\dot Q\\
	&= -\frac{\mu_2}{2}\int_{\R^d} \dot Q: A  -\underbrace{\frac{\mu_2}{2}\int_{\R^d} \dot Q:\Omega}_{=0}
	+\frac{\tilde\mu_2}{2}\int_{\R^d}A:\dot Q\\	
	&=\left\{\begin{array}{ll} 0 &\textrm{ if }\tilde\mu_2=\mu_2\\
	-\mu_2\int_{\R^d} \dot Q: A  &\textrm{ if }\tilde\mu_2=-\mu_2
	\end{array}\right.
\end{align*}

Summarizing all the previous estimates, we get:

\begin{equation}\label{1st-energy-estimate}
\begin{aligned}
	\frac{d}{dt}&\int_{\R^d} \frac{1}{2}
		\big(
			|v|^2+J|\dot Q|^2+L|\nabla Q|^2
		\big)+\psi_B(Q)\,\dd x	
	+\frac{\beta_4}{2}	\int_{\R^d}	|\nabla v|^2\,\dd x	 +\\&
	+\beta_1 \int_{\R^d} (Q:A)^2\dd x
	+\mu_1\int_{\R^d}| \dot Q - [\Omega, Q]|^2\dd x\\&
	=\,\tilde \mu_2\int_{\R^d}\dot Q: A\,+\,
	\,\mu_2\int_{\R^d}\trc\{A[\Omega,\,Q]\},\\
\end{aligned}
\end{equation}
 where we have used the assumption $\beta_5\,+\,\beta_6\,=\,0$, in \eqref{parodi-corot+}. 
We recall now the condition we impose on $\mu_2$ namely \eqref{mu2cond}, so that if $\tilde \mu_2=\mu_2=0$ then the right hand side of the above equation is null. Otherwise, 
if $\tilde \mu_2=-\mu_2$, recalling that  $\mcN=\dot Q-[\Omega,Q]$ we obtain
\begin{equation*}
\,\tilde \mu_2\int_{\R^d}\dot Q: A\,+\,
	\,\mu_2\int_{\R^d}\trc\{A[\Omega,\,Q]\}\,=\,\tilde\mu_2\int_{\R^d}A:\mcN.
\end{equation*}

In both cases,  we note out of the above that there exists a constant $C_d$ depending on $\tilde\mu_2,\beta_5,\beta_6,\mu_2$ such that if $\beta_4>C_d$ then the total  energy decays, i.e.
\be\label{eq:estineqbd}
\frac{d}{dt}\int_{\R^d} \frac{1}{2}
		\big(
			|v|^2+J|\dot Q|^2+L|\nabla Q|^2
		\big)+\psi_B(Q)\,\dd x\le 0
\ee

 If $d=2$ then the term $\textrm{tr}(Q^3)$ vanishes and if furthermore  $a\ge 0$ then $\psi_B(Q)\ge 0$ and the previous estimate provides the claimed apriori bounds.  However, in general  the estimate \eqref {eq:estineqbd} does not suffice for obtaining apriori bounds on the norms of the solutions, because $\psi_B(Q)$ can be negative.In order to deal with this we need to obtain apriori control on suitable $L^p$ norms of $Q$.

We multiply  \eqref{eqinv:tensors+} by $Q$, take the trace and integrate over space, obtaining: 
\begin{align*}
	J		\int_{\R^d}		\ddot Q_{\alpha\beta} Q_{\alpha\beta} \dd x+ 
	\mu_1	\int_{\R^d}		\dot  Q_{\alpha\beta} Q_{\alpha\beta} \dd x-
	\underbrace{
	\mu_1	\int_{\R^d}		\big( \Omega_{\alpha\gamma} Q_{\gamma\beta}-Q_{\alpha\gamma}\Omega_{\gamma\beta}\big)Q_{\alpha\beta}\dd x}_{=0} -\\- 
			\int_{\R^d}		L\Delta Q_{\alpha\beta} Q_{\alpha\beta}\dd x= 
	\underbrace{
			\int_{\R^d} 	\big( -a Q_{\alpha\beta}Q_{\alpha\beta} + b Q_{\alpha\gamma}Q_{\gamma\beta}Q_{\beta\alpha}  				-
			                                                      c \big( Q_{\alpha\beta}Q_{\alpha\beta} \big)^2 \big)\dd x}_{:=P(Q)} 	+\\+
	\frac{\tilde\mu_2}{2}	\int_{\R^d}		A_{\alpha\beta}Q_{\alpha\beta}\dd x.	
\end{align*}
Now, let us remark that
\begin{align*}
	J							\int_{\R^d}		\ddot Q_{\alpha\beta} Q_{\alpha\beta} \dd x &=
	J							\int_{\R^d}		\partial_t 					\dot Q_{\alpha\beta} Q_{\alpha\beta} + 
												v_\gamma	 	\dot Q_{\alpha\beta,\gamma} Q_{\alpha\beta}\dd x \\&=
	J							\int_{\R^d}		\partial_t					\big( 	\dot Q_{\alpha\beta} Q_{\alpha\beta} 	\big) + 
												v_\gamma\big(	\dot Q_{\alpha\beta} Q_{\alpha\beta} 	\big)_{,\gamma}\dd x -			
	J							\int_{\R^d}		\dot Q_{\alpha\beta}\dot Q_{\alpha\beta}\dd x\\&=
	J		 \frac{\dd}{\dd t}	\int_{\R^d}		\frac 12 |\dot Q + Q|^2 - \frac 12 |\dot Q|^2 -\frac 12 |Q|^2\dd x - 
	J							\int_{\R^d}		|\dot Q|^2\dd x		 			
\end{align*}
and moreover
\begin{align*}
	\mu_1									\int_{\R^d}		\dot  Q_{\alpha\beta} Q_{\alpha\beta}\dd x -
											L\int_{\R^d}		\Delta Q_{\alpha\beta} Q_{\alpha\beta}\dd x &= 
	\frac{\mu_1}{2}	\frac{\dd}{\dd t}		\int_{\R^d}		|Q|^2\dd x + \\ &+
	\underbrace{
	\mu_1									\int_{\R^d}		v_{\gamma}  Q_{\alpha\beta} Q_{\alpha\beta,\gamma}\dd x}_{=0}  +
											L\int_{\R^d}		|\nabla Q|^2\dd x.
\end{align*}
Thus, summarizing, it turns out that
\begin{equation}\label{2nd-energy-estimate-part1}
\begin{aligned}
	\frac 12 \frac{\dd}{\dd t}	\int_{\R^d}		J |\dot Q + Q|^2 - J |\dot Q|^2 +(\mu_1-J) |Q|^2\dd x - 
	J							\int_{\R^d}		|\dot Q|^2\dd x	 +	 			
	L\int_{\R^d}		|\nabla Q|^2\dd x =\\ = 
	P(Q) +
	\frac{\tilde\mu_2}{2}	\int_{\R^d}	\trc\{	AQ\}\dd x.	
\end{aligned}
\end{equation}
{ We will use this estimate together with \eqref{1st-energy-estimate} to obtain an estimate of the $L^2$ norm of $Q$ which then will allow to obtain out of  \eqref{1st-energy-estimate} the desired apriori estimates on $Q,\dot Q$ and $v$.

We note that if $Q$ has eigenvalues $\lambda,\mu,-\lambda-\mu$ (as it is traceless) we have $|Q|^2=2(\lambda^2+\mu^2+\lambda\mu)$ and $\tr(Q^3)=-3\lambda\mu(\lambda+\mu)$ thus for any $\delta>0$ we have $|\tr(Q^3)|\le \frac{3\delta}{8}|Q|^4+\frac{3}{2\delta}|Q|^2$. Furthermore $\tr(Q^3)=0$ if $d=2$ since $Q$ is a two-by-two traceless symmetric matrix. If $d=3$ we claim that there exists $\bar\mu_1>0$ depending on $a,b$ and $c>0$ such that 

\be\label{est:psibq2}
\bar\mu_1|Q|^2+4\psi_B(Q)>\eps |Q|^2
\ee for some $\eps>0$.

Indeed, we have $\bar\mu_1|Q|^2+4\psi_B(Q)-\eps|Q|^2=(\bar \mu_1+2a-\eps)|Q|^2-\frac{2b}{3}\textrm{tr}(Q^3)+\frac{c}{2}|Q|^4\ge
(\bar \mu_1+2a-\eps)|Q|^2-\frac{2|b|}{3}(\frac{3\delta}{8}|Q|^4+\frac{3}{2\delta}|Q|^2)+\frac{c}{2}|Q|^4$. Thus taking $\frac{3\delta}{8}\frac{2|b|}{3}=\frac{c}{4}$ and letting $\bar\mu_1$ be large enough we obtain the claimed relation \eqref{est:psibq2}.
 Then, assuming that $J<\mu_1$ and adding \eqref{2nd-energy-estimate-part1}  to twice times \eqref{1st-energy-estimate}
 we obtain
\begin{equation}\label{1st-energy-estimate-part1}
\begin{aligned}
	\frac{d}{dt}&\int_{\R^d} |v|^2+\frac{J}{2}\left(|\dot Q + Q|^2+|\dot Q|^2\right)+L|\nabla Q|^2+\frac 12(\mu_1-J) |Q|^2+2\psi_B(Q)
		\,\dd x	+\\&+L\int_{\R^d}|\nabla Q|^2 +
		2\beta_1 \int_{\R^d} \trc\{ QA \}^2+2\beta_4	\int_{\R^d}	|\nabla v|^2\dd x	+2\mu_1	\int_{\R^d} |\dot Q-[\Omega,Q]|^2\dd x\\&=P(Q)+ J\int_{\R^d} |\dot Q|^2\dd x+\frac{\tilde\mu_2}{2}\int_{\R^d}\trc\{AQ\}\dd x+2\,\tilde \mu_2\int_{\R^d}A: \mcN.
\end{aligned}
\end{equation}

Thus for  $\mu_1>\bar\mu_1$,  $J$ small enough and $\beta_4$ large enough we have a Gronwall-type inequality for the $L^2$ norm of $Q$ which then can be combined with \eqref{1st-energy-estimate} to obtain the apriori bounds \eqref{aprioribounds}. $\Box$

\section{Global strong solutions}\label{sec2_1/2}

\subsection{A priori high-norm estimates}
In this subsection we provide the apriori estimates that exhibit in a relatively simple  setting the higher-order  cancellations and estimates that will allow us to prove afterwards the existence of strong solutions through a suitable approximation scheme, in the next subsection.
 
We consider the inhomogeneous Sobolev space $H^s$ with $s>\frac{d}{2}$, equipped with inner product
\begin{equation*}
	\langle u,\,v\rangle_{H^s} = \langle u,\,v \rangle_{L^2} + \langle u,\,v\rangle_{\dot H^s}.
\end{equation*}

where 

$$
\langle u,v\rangle_{\dot H^s}:=\langle (\sqrt{-\Delta})^s u, (\sqrt{-\Delta})^s v\rangle_{L^2}
$$ with $({\sqrt{-\Delta})^s u}(\xi):=\mathcal{F}^{-1}\left(|\xi|^s \mathcal{F} u(\xi)\right)$.
 
We recall that for $s>\frac{d}{2}$ we have $H^s(\R^d)\hookrightarrow L^\infty(\R^d)$ and, most importantly for our purposes, it is an algebra, with 

$$\|uv\|_{H^s}\le \|u\|_{H^s}\|v\|_{H^s}.$$

\bigskip We assume that the solutions are suitably smooth and decaying sufficiently fast at infinity to be able to integrate by parts without boundary terms whenever necessary. Taking the $H^s$ product between \eqref{eqinv:momentum+} and $v$, we get
\begin{equation}\label{prop:est1}
\begin{aligned}
	\frac 12 \frac{\dd }{\dd t} 	\| v\|_{H^s}^2	&+ \beta_4 \|\nabla v\|_{H^s}^2 = 
	-						\langle		v\cdot \nabla v		,\, 		   v	\rangle_{H^s}
	+ 						L\langle	\nabla Q\odot \nabla Q	,\, 	\nabla v	\rangle_{H^s}-\\&
	- \beta_1 				\langle 	\tr\{AQ\}	Q		,\,		\nabla v	\rangle_{H^s} 
	- \beta_5				\langle 				AQ		,\,		\nabla v	\rangle_{H^s} 
	- \beta_6				\langle					QA		,\,		\nabla v	\rangle_{H^s}+\\&
	- \frac{\mu_2}{2}		\langle		\dot Q-[\Omega,\,Q]	,\,		\nabla v	\rangle_{H^s}
	-\mu_1					\langle		[Q,\,\dot Q]		,\,		\nabla v\rangle_{H^s}
	+\mu_1					\langle		[Q,\,[\Omega,\,Q]]	,\,		\nabla v	\rangle_{H^s}
\end{aligned}
\end{equation} 
Now, let us observe that
\begin{equation*}
	\langle		v\cdot \nabla v		,\, 		 v	\rangle_{H^s} =
	\underbrace{\langle v\cdot \nabla v,\,v\rangle_{L^2_x}}_{=0} + 
	\langle v\cdot \nabla v,\, v\rangle_{\dot H^s}.
\end{equation*}
	
	Since $s>d/2$, then $H^{m}(\RR^d)$  is continuously embedded in $L^\infty(\RR^d)$,  for $m$  a natural number in $[s, 1+s)$. Then, by the classical Gagliardo-Niremberg 
	inequality we have
	\begin{align*}
		\| v \|_{L^\infty(\RR^d)}\lesssim \| v\|_{L^2(\RR^d)}^\theta \|  v \|_{\dot H^m(\RR^d)}^{1-\theta}
		&= \| v\|_{L^2(\RR^d)}^\theta \| \nabla v \|_{\dot H^{m-1}(\RR^d)}^{1-\theta}\\
		&\lesssim
		\| v\|_{L^2(\RR^d)}^\theta \| \nabla v \|_{ H^{m-1}(\RR^d)}^{1-\theta}
		\lesssim \| v\|_{L^2(\RR^d)}^\theta \| \nabla v \|_{H^s(\RR^d)}^{1-\theta},
	\end{align*} 
	with $\theta :=\frac{2m-d}{2m}$. Hence the second term on the right-hand side of the above equality can be estimated as follows:\vspace{-0.2cm}
	\begin{equation*}
	\begin{aligned}
		|\langle v\cdot \nabla v,\, v\rangle_{\dot H^s}|
		&=			|\langle v\otimes   v,\, \nabla v\rangle_{\dot H^s}|\\
		&\leq		\|					v	\|_{L^\infty_x	} 
					\|					v	\|_{\dot H^s	}
					\|	\nabla 			v	\|_{\dot H^s	}\\
		&\lesssim	\|					v	\|_{L^2_x		}^{\theta} 
					\| 	\nabla 			v 	\|_{ H^s}^{1-\theta}
					\|					v	\|_{\dot H^s	}
					\|	\nabla 			v	\|_{ H^s	},
	\end{aligned}
	\end{equation*}
	which yields
	\begin{equation}\label{apriori_est_ucdotnablauU}
	\begin{aligned}		
		|\langle v\cdot \nabla v,\, v\rangle_{\dot H^s}|
		&\lesssim	\|					v	\|_{L^2_x		}^{\theta} 
					\|					v	\|_{\dot H^s	}
					\|	\nabla 			v	\|_{ H^s	}^{2-\theta	}\\	
		&\lesssim	\|					v	\|_{L^2_x		}^{\theta	}
					\|					v	\|_{\dot H^s	}^{1-\theta	}	 
					\|					v	\|_{\dot H^s	}^{\theta	}
					\|	\nabla 			v	\|_{	 H^s	}^{2-\theta	}\\	
		&\lesssim	\|					v	\|_{	 H^s	}	 
					\|					v	\|_{\dot H^s	}^{\theta	}
					\|	\nabla 			v	\|_{	 H^s	}^{2-\theta	}.					
	\end{aligned}
	\end{equation}
	Since $s>d/2\geq 1$ then 
	$\|	v	\|_{\dot H^s	}^{\theta	}=\| \nabla 	v	\|_{\dot H^{s-1}}^{\theta	}\leq 	\|	\nabla	v	\|_{ H^s	}^{\theta	}$, thus
	\begin{equation}\label{apriori_est_ucdotnablauU2}	
		|\langle v\cdot \nabla v,\, v\rangle_{\dot H^s}|\leq 
		\|					v	\|_{	 H^s	}	 
		\|	\nabla 			v	\|_{	 H^s	}^2\lesssim \|\nabla v\|_{H^s}^2\|v\|_{H^s}^2 + c_{\beta_4}\|	\nabla v\|_{H^s}^2.
	\end{equation}
	Now, the second term on the right-hand side of \eqref{prop:est1} is 
	\begin{equation*}
		\langle \nabla Q\odot \nabla Q ,\,\nabla v\rangle_{H^s}=
		\langle \nabla Q\odot \nabla Q ,\,\nabla v\rangle_{L^2_x}+
		\langle \nabla Q\odot \nabla Q ,\,\nabla v\rangle_{\dot H^s}.
	\end{equation*}
	We will see that $\langle \nabla Q\odot \nabla Q ,\,\nabla v\rangle_{L^2_x}$ is going to be simplified, while

\begin{equation*}
	\langle		\nabla Q\odot \nabla Q,\,\nabla v\rangle_{\dot H^s}
		\lesssim
	\|	\nabla Q	\|_{L^\infty_x	}
	\|	\nabla Q	\|_{\dot H^s	}
	\|	\nabla v	\|_{\dot H^s	}
	\lesssim
	\|	\nabla Q	\|_{H^s			}^2
	\|	\nabla Q	\|_{H^s			}^2
	+c_{\beta_4}
	\|	\nabla v	\|_{H^s			}^2,
\end{equation*}
Finally, the remaining terms on the right-hand side of \eqref{prop:est1} are controlled as follows:
\begin{equation*}
	\beta_1 	\langle 	\tr\{AQ\}	Q		,\,		\nabla v	\rangle_{H^s}
		\lesssim		\|			A	\|_{H^s}\|	Q	\|_{H^s}^2	\|	\nabla v	\|_{H^s}
	\lesssim	\|	\nabla v	\|_{H^s}^2\|	Q	\|_{H^s}^2,
\end{equation*}

\begin{equation*}
	\beta_5				\langle 				AQ		,\,		\nabla v	\rangle_{H^s}+
	\beta_6				\langle					QA		,\,		\nabla v	\rangle_{H^s}
	\lesssim	\|			A	\|_{H^s}\|	Q	\|_{H^s}	\|	\nabla v	\|_{H^s}
	\lesssim	\|	\nabla v	\|_{H^s}^2\|	Q	\|_{H^s}^2 + c_{\beta_4}\|	\nabla v\|_{H^s}^2,
\end{equation*}

\begin{equation*}
	\frac{\mu_2}{2}		\langle		[\Omega,\,Q]		,\,		\nabla v	\rangle_{H^s}
	\lesssim
				\|			 Q	\|_{H^s}
				\|	\nabla 	 v	\|_{H^s}^2
	\lesssim
				\|	\nabla	 v	\|_{H^s}^2
				\|	 		 Q	\|_{H^s}^2 +
	c_{\beta_4}	\|\nabla 	 v	\|_{H^s}^2 ,
\end{equation*}

\begin{equation*}
	\mu_1\langle [Q,\,\dot Q],\,\nabla v\rangle_{H^s}
	\lesssim
	\|			Q	\|_{H^s}
	\|	\dot 	Q	\|_{H^s}
	\|	\nabla  v	\|_{H^s}
	\lesssim
	\|	\nabla  v	\|_{H^s}^2
	\|			Q	\|_{H^s}^2	+ c_{\mu_1}
	\|	\dot 	Q	\|_{H^s}^2,	
\end{equation*}

\begin{equation*}
	\mu_1\langle [Q,\,[\Omega,\, Q]],\,\nabla v\rangle_{H^s}
	\lesssim
	\|			Q	\|_{H^s}^2
	\|	\nabla v	\|_{H^s}^2.
\end{equation*}
Thus, summarizing the previous estimates we get
\begin{equation}\label{Hs_est1}
\begin{aligned}
	\frac{1}{2}\frac{\dd }{\dd t}&\|	v	\|_{H^s}^2 +\beta_4\|	\nabla v	\|_{H^s}^2 + \frac{\mu_2}{2} \langle \dot Q, \nabla v \rangle_{H^s}
	-L\langle \nabla Q\odot \nabla Q,\,\nabla v\rangle_{L^2_x}\lesssim\\	&
	\lesssim 
	\|	\nabla	v	\|_{H^s}^2
	\big(
		\|			v	\|_{H^s}^2 +
		\|	\nabla	Q	\|_{H^s}^2 +
		\|			Q	\|_{H^s}^2
	\big) + c_{\mu_1}
	\|	\dot 	Q	\|_{H^s}^2 + c_{\beta_4}
	\|	\nabla	v	\|_{H^s}^2+\|\nabla Q\|_{H^s}^2\|\nabla Q\|_{H^s}^2.
\end{aligned}
\end{equation}

Now, let us take the $H^s$-inner product between the order equation and $\dot Q$:
\begin{align*}
	J		\langle		\ddot 		Q	,\,	\dot Q			\rangle_{H^s}  +
	\mu_1	\|	\dot 	Q	\|_{H^s}^2 - 
	\mu_1	\langle		[\Omega,\,Q]	,\,	\dot Q			\rangle_{H^s}  +
	\frac L2 \frac{\dd }{\dd t}\|\nabla Q\|_{H^s}^2-
			L\langle		\Delta Q		,\,	v\cdot\nabla Q	\rangle_{H^s}  =\\=-
	a		\langle		Q				\,\,\dot Q			\rangle_{H^s} 								+
	b		\langle		Q^2				,\,					\dot Q	\rangle_{H^s}  	-
	c		\langle		Q|Q|^2\}	,\,					\dot Q	\rangle_{H^s}   +
	\frac{\tilde\mu_2}{2}	\langle 	A				,\,					\dot Q	\rangle_{H^s}.
\end{align*}
We begin observing that 
\begin{equation*}
	J		\langle		\ddot 		Q	,\,	\dot Q	\rangle_{H^s} = \frac J2 \frac{\dd }{\dd t}\|	\dot Q\|_{H^s}^2 +
			J\langle		v\cdot \nabla\dot Q	,\,	\dot Q	\rangle_{H^s}
\end{equation*}
and that
\begin{equation}\label{apriori_est_uDotQDotQ}
\begin{aligned}
	\langle	&	v\cdot \nabla \dot Q,\,	\dot Q	\rangle_{H^s}
	=
	\underbrace{
	\langle		v\cdot \nabla \dot Q,\,	\dot Q	\rangle_{L^2}}_{=0}+
	\langle		v\cdot \nabla \dot Q,\,	\dot Q	\rangle_{\dot H^s}	
	\\&=
	\langle		[(\sqrt{-\Delta})^s,v\cdot \nabla] \dot Q,\,(\sqrt{-\Delta})^s	\dot Q	\rangle_{L^2}+
	\underbrace{
	\langle		v\cdot \nabla (\sqrt{-\Delta})^s \dot Q,\,(\sqrt{-\Delta})^s	\dot Q	\rangle_{L^2}}_{=0}
	\\&
	\lesssim 
	\| \nabla v \|_{ H^s}\|\dot Q\|_{H^s}^2
	\lesssim 
	\|	\nabla 	v	\|_{H^s}^2
	\|	\dot  Q	\|_{H^s}^2+c_{\mu_1}
	\|	\dot  	Q	\|_{H^s}^2.
\end{aligned}
\end{equation} where for the last inequality we used the commutator estimate from \cite{commutator}:

$$\|[(\sqrt{-\Delta})^s,v\cdot \nabla]B\|_{L^2}=\|(\sqrt{-\Delta})^s [(v\cdot\nabla) B]-(v\cdot\nabla)(\sqrt{-\Delta})^s B\|_{L^2_x}\le c\|\nabla v\|_{H^s}\|B\|_{H^s}.$$

\bigskip
Moreover
\begin{equation}\label{apriori_est_[OmQ]}
\begin{aligned}
	\mu_1	\langle		[\Omega,\,Q]	,\,	\dot Q	\rangle_{H^s}
	\lesssim 
	\|	\nabla 	v	\|_{H^s}
	\|			Q	\|_{H^s}
	\|	\dot  	Q	\|_{H^s}
	\lesssim
	\|	\nabla 	v	\|_{H^s}^2
	\|			Q	\|_{H^s}^2+c_{\mu_1}
	\|	\dot  	Q	\|_{H^s}^2.
\end{aligned}		
\end{equation}
Now, we have
\begin{align*}
	\langle	&\Delta Q,\,	v\cdot\nabla Q	\rangle_{\dot H^s}
	=
	\langle (\sqrt{-\Delta})^s Q_{\alpha\beta, ii},\,(\sqrt{-\Delta})^s\left( v_j Q_{\alpha\beta, j}\right)\rangle_{L^2_x}=\\&-
	\langle (\sqrt{-\Delta})^s Q_{\alpha\beta, i},\,(\sqrt{-\Delta})^s\left(v_{j,i} Q_{\alpha\beta, j}\right)\rangle_{L^2_x}-
	\langle (\sqrt{-\Delta})^s Q_{\alpha\beta, i},\,(\sqrt{-\Delta})^s\left(v_{j} Q_{\alpha\beta, ij}\right)\rangle_{L^2_x}
\end{align*}
and
\begin{equation*}
	|\langle (\sqrt{-\Delta})^s Q_{\alpha\beta, i},\,(\sqrt{-\Delta})^s\left(v_{j,i} Q_{\alpha\beta, j}\right)\rangle_{L^2_x}|
	\leq
	\|	\nabla	Q	\|_{ H^s}^2
	\|	\nabla	v	\|_{ H^s}
	\lesssim
	\|	\nabla 	Q	\|_{H^s}^2
	\|	\nabla	Q	\|_{H^s}^2 + c_{\beta_4}
	\|	\nabla	v	\|_{H^s}^2,	
\end{equation*}
\begin{align*}
	-\langle (\sqrt{-\Delta})^s Q_{\alpha\beta, i},\,(\sqrt{-\Delta})^s(v_{j} Q_{\alpha\beta, ij})\rangle_{L^2_x}
	=
	-\langle   (\sqrt{-\Delta})^s Q_{, i},\, [	(\sqrt{-\Delta})^s,\,	v\cdot\nabla ]Q_{,i}\rangle_{L^2_x}-\\
	-\underbrace{
	 \langle   (\sqrt{-\Delta})^s Q_{, i},\, 	v\cdot\nabla (\sqrt{-\Delta})^sQ_{,i}\rangle_{L^2_x}}_{=0}
	\lesssim
	\|		\nabla	Q											\|_{\dot H^s	}
	\|		[	(\sqrt{-\Delta})^s,\,	v\cdot\nabla ]Q_{,i}	\|_{L^2_x	 	}
	\lesssim\\
	\lesssim
	\|		\nabla	Q		\|_{H^s}^2
	\|		\nabla 	v		\|_{ H^s}
	\lesssim
	\|	\nabla 	Q	\|_{H^s}^2
	\|	\nabla	Q	\|_{H^s}^2 + c_{\beta_4}
	\|	\nabla	v	\|_{H^s}^2,
\end{align*}
which yields
\begin{equation}\label{apriori_est_DeltaQuNablaU}
	\langle 	\Delta Q,\,	v\cdot\nabla Q	\rangle_{\dot H^s}
	\lesssim
	\|	\nabla 	Q	\|_{H^s}^2
	\|	\nabla	Q	\|_{H^s}^2 + c_{\beta_4}
	\|	\nabla	v	\|_{H^s}^2.
\end{equation}
Finally
\begin{equation*}
	-a\langle	\dot Q ,\, Q	\rangle_{H^s} =-\frac{a}{2} \frac{\dd }{\dd t}\| Q\|_{H^s}^2 - a\langle	v\cdot \nabla Q ,\, Q	\rangle_{H^s},
\end{equation*}
with
\begin{equation*}
	\left|a\langle	v\cdot \nabla Q ,\, Q	\rangle_{H^s}\right|\lesssim \| v\|_{H^s}\|\nabla Q\|_{H^s}\| Q \|_{H^s}
	\lesssim
	\|	Q	\|_{H^s}^2\|	v	\|_{H^s}^2+c\|\nabla	Q	\|_{H^s}^2
\end{equation*}

\begin{equation*}
	\left|b\langle \dot Q	,\,	Q^2 \rangle_{H^s}\right|
	\lesssim
	\|	\dot Q	\|_{H^s}\|	Q	\|_{H^s}^2
	\lesssim
	\| 		Q	\|_{H^s}^2
	\| 		Q	\|_{H^s}^2 + c_{\mu_1}
	\|	\dot Q	\|_{H^s}^2
\end{equation*}
and
\begin{equation*}
	\left|c\langle	\dot Q	,\,	Q|Q|^2\rangle_{H^s}\right|
	\lesssim
	\|	\dot Q	\|_{H^s}
	\|		 Q	\|_{H^s}^3
	\leq
	\|		Q	\|_{H^s}^2
	\big(
		\|	\dot 	Q	\|_{H^s}^2+
		\|			Q	\|_{H^s}^2
	\big).
\end{equation*}

Thus, summarizing the previous estimates we get
\begin{equation}\label{Hs_est2}
\begin{aligned}
	& \frac{\dd }{\dd t}
	\big[\frac J2 
			\|	\dot Q	\|_{H^s}^2 +\frac L2 \|	\nabla 	Q	\|_{H^s}^2 + \frac{a}{2}\|Q\|_{H^s}^2	
	\big] + \mu_1
	\|		\dot 	Q	\|_{H^s}^2-
	 L\langle	v\cdot\nabla Q,\,	\Delta Q\rangle_{L^2_x} -\frac{\tilde\mu_2}{2} \langle	A,\,\dot Q\rangle_{H^s}
	\lesssim\\&
	\lesssim
	\big(
		\|			Q	\|_{H^s}^2 +
		\|	\nabla	Q	\|_{H^s}^2 +
		\|	\nabla 	v	\|_{H^s}^2+
		\|v\|_{H^s}^2
	\big)
	\big(
		\|	\dot	Q	\|_{H^s}^2 +
		\|			Q	\|_{H^s}^2 +
		\|	\nabla	Q	\|_{H^s}^2 
	\big)+\\&\quad + c_{\beta_4}
	\|	\nabla 	v	\|_{H^s}^2 + c_{\mu_1}
	\|	\dot	Q	\|_{H^s}^2+c\|\nabla Q\|_{H^s}^2.	
\end{aligned}
\end{equation}

Now, let us consider the $H^s$-inner product between the order tensor equation and $Q/2$, namely
\begin{align*}
	\frac J2 			\langle	\ddot 	Q		,\,		Q	\rangle_{H^s} + 
	\frac{\mu_1}{2}		\langle	\dot 	Q		,\,		Q	\rangle_{H^s} -
	\frac{\mu_1}{2}		\langle	[\Omega,\,Q]	,\,		Q	\rangle_{H^s} +
	\frac{L}{2}\|	\nabla 	Q	\|_{H^s}^2 + \frac{a}{2}\|	Q	\|_{H^s}^2 = \\=
	\frac{b}{2}					\langle		Q^2			,\, 	Q	\rangle_{H^s} -
	\frac{c}{2}					\langle		Q|Q|^2	,\, 	Q	\rangle_{H^s} +
	\frac{\tilde\mu_2}{4}				\langle		A			,\, 	Q	\rangle_{H^s}.
\end{align*}
First, let us observe that
\begin{equation*}
	\langle	\ddot 	Q				,\,		Q	\rangle_{H^s		} =
	\langle	\ddot 	Q				,\,		Q	\rangle_{L^2_x		} +
	\langle	\ddot 	Q				,\,		Q	\rangle_{\dot H^s	}.
\end{equation*}	
We have
\begin{align*}
	\langle	\ddot 	Q									,\,		Q	\rangle_{L^2_x		}& =
	\langle \partial_t \dot 	Q	,\,		Q	\rangle_{L^2_x		} +
	\langle  v\cdot\nabla \dot Q	,\,		Q	\rangle_{L^2_x		} \\&=
	\frac {d}{dt}
	\langle  \dot 	Q	,\,		Q			\rangle_{L^2_x		}-
	\langle  \dot 	Q	,\,	\partial_t	Q	\rangle_{L^2_x		}-
	\langle  \dot Q		,\,	v\cdot\nabla Q	\rangle_{L^2_x		}=
	\frac{d}{dt}
	\langle  \dot 	Q	,\,		Q	\rangle_{L^2_x		} -
	\|	\dot Q	\|_{L^2_x}^2.
\end{align*}
Moreover
\begin{equation*}
						\langle \partial_t	\dot Q,\, 				Q	\rangle_{\dot H^s}=
	\frac{\dd}{\dd t}	\langle				\dot Q,\, 				Q	\rangle_{\dot H^s}-
						\langle				\dot Q,\,	\partial_t 	Q	\rangle_{\dot H^s}
\end{equation*}	
and
\begin{equation}\label{apriori_est1}
\begin{aligned}
	\langle			v\cdot\nabla	\dot 	Q	,\,				Q	\rangle_{\dot H^s}&=
	\langle	(\sqrt{-\Delta})^s		\left(v\cdot\nabla	\dot 	Q\right)	,\,	 (\sqrt{-\Delta})^s		Q	\rangle_{L^2_x}\\&=
	\langle	[(\sqrt{-\Delta})^s,\,		v\cdot\nabla ]	\dot 	Q	,\,	 (\sqrt{-\Delta})^s		Q	\rangle_{L^2_x} -
	\langle	(\sqrt{-\Delta})^s			\dot 	Q	,\,	v\cdot\nabla (\sqrt{-\Delta})^s		Q	\rangle_{L^2_x} \\&=
	\langle	[(\sqrt{-\Delta})^s,\,		v\cdot\nabla ]	\dot 	Q	,\,	 (\sqrt{-\Delta})^s		Q	\rangle_{L^2_x} +\\
	&\quad\quad\quad\quad\quad+
	\langle	(\sqrt{-\Delta})^s			\dot 	Q	,\,	[(\sqrt{-\Delta})^s,\,		v\cdot\nabla ]	Q	\rangle_{L^2_x} -
	\langle				\dot 	Q	,\,	  v\cdot\nabla Q \rangle_{\dot H^s},
\end{aligned}
\end{equation}	
Thus, summarizing, we get
\begin{equation}\label{apriori_est2}
\begin{aligned}
	\frac J2 
	\langle				\ddot 	Q			,\,		&Q	\rangle_{H^s_x		} =
	\frac J2 \frac{\dd}{\dd t}	
	\langle				\dot 	Q			,\, 	Q	\rangle_{H^s} - 
	\frac J2 \|	\dot Q	\|_{H^s}^2 +\\&+
	\frac J2 \langle	[(\sqrt{-\Delta})^s,\,		v\cdot\nabla ]	\dot 	Q	,\,	 (\sqrt{-\Delta})^s		Q	\rangle_{L^2_x} +
	\frac J2 \langle	(\sqrt{-\Delta})^s			\dot 	Q	,\,	[(\sqrt{-\Delta})^s,\,		v\cdot\nabla ]	Q	\rangle_{L^2_x}
\end{aligned}
\end{equation} 
with the estimate
\begin{equation}\label{apriori_est3}
\begin{aligned}
	\frac J2 \langle	[(\sqrt{-\Delta})^s,\,		v\cdot\nabla ]	\dot 	Q	,\,	 (\sqrt{-\Delta})^s		Q	\rangle_{L^2_x}+
	\frac J2 \langle	(\sqrt{-\Delta})^s			\dot 	Q	,\,	[(\sqrt{-\Delta})^s,\,		v\cdot\nabla ]	Q	\rangle_{L^2_x}\\
	\lesssim
	\|	\nabla	v	\|_{	H^s}
	\|	\dot 	Q	\|_{	H^s}
	\|			Q	\|_{ 	H^s}
	\lesssim
	\|	\nabla	v	\|_{ 	H^s}^2
	\|			Q	\|_{ 	H^s}^2+c_{\mu_1}
	\|	\dot 	Q	\|_{ 	H^s}^2.
\end{aligned}
\end{equation}
Furthermore
\begin{equation}\label{apriori_est4}
	\frac J2 \frac{\dd}{\dd t}	\langle				\dot Q,\, 				Q	\rangle_{\dot H^s} =
	\frac J4 \frac{\dd}{\dd t}
	\big[
		\|	\dot Q	+	Q	\|_{\dot H^s}^2 - 
		\|	\dot Q			\|_{\dot H^s}^2 -
		\|				Q	\|_{\dot H^s}^2
	\big].
\end{equation}

\par On the other hand
\begin{equation*}
	\frac{\mu_1}{2}	\langle 		\dot Q		,\,		Q	\rangle_{H^s} =
	\frac{\mu_1}{2}	\langle 	\partial_t Q	,\,		Q	\rangle_{H^s} +
	\frac{\mu_1}{2}	\langle 	v\cdot\nabla Q	,\,		Q	\rangle_{H^s} =
	\frac{\mu_1}{4}
	\frac{\dd}{\dd t}
	\|	Q	\|_{H^s}^2+
	\frac{\mu_1}{2}	\langle 	v\cdot\nabla Q	,\,		Q	\rangle_{H^s},
\end{equation*}
with
\begin{equation*}
	\frac{\mu_1}{2}	\langle 	v\cdot\nabla Q	,\,		Q	\rangle_{H^s}
	\lesssim
	\|	v	\|_{H^s}\|	\nabla	Q	\|_{H^s}\|	Q	\|_{H^s}
	\lesssim
	\|	Q			\|_{H^s}^2
	\|	v			\|_{H^s}^2 +c
	\|	\nabla	Q	\|_{H^s}^2.
\end{equation*}
Then
\begin{equation*}
	\frac{\mu_1}{2}		\langle	[\Omega,\,Q]	,\,		Q	\rangle_{H^s}
	\lesssim
	\|	\nabla	v	\|_{H^s}
	\|			Q	\|_{H^s}^2
	\lesssim
	\|			Q	\|_{H^s}^2
	\|			Q	\|_{H^s}^2+c_{\beta_4}
	\|	\nabla	v	\|_{H^s}^2,
\end{equation*}

\begin{equation*}
	\left|\frac b2					\langle		Q^2			,\, 	Q	\rangle_{H^s}\right|
	\lesssim
	\|			Q	\|_{H^s}^3
	\lesssim
	\|			Q	\|_{H^s}^2
	\|			Q	\|_{H^s}^2+c_a
	\|			Q	\|_{H^s}^2,	
\end{equation*}

\begin{equation*}
	\left|\frac c2				\langle		Q|Q|^2	,\, 	Q	\rangle_{H^s}\right|
	\lesssim
	\|			Q	\|_{H^s}^2
	\|			Q	\|_{H^s}^2
\end{equation*}
and finally
\begin{equation*}
	\left|\frac{ \tilde\mu_2}{ 4}	\langle		A			,\, 	Q	\rangle_{H^s}\right|
	\leq
	\left|\frac{\tilde \mu_2}{ 4}\right|\|\nabla v\|_{H^s}\| Q\|_{H^s}
	\leq
	\frac{|\tilde\mu_2|^2}{32a(1-\varepsilon)}
	\|\nabla 	v	\|_{H^s}^2 +
	\frac{a}{2}(1-\varepsilon)
	\| 			Q	\|_{H^s}^2
\end{equation*}
Then, summarizing the previous estimates, we get
\begin{equation}\label{Hs_est3}
\begin{aligned}
	\frac{\dd}{\dd t}
	\Big[
		\frac J4			\|	\dot Q	+	Q	\|_{H^s}^2 - 
		\frac J4			\|	\dot Q			\|_{H^s}^2 +
		\frac{\mu_1-J}4		\|				Q	\|_{H^s}^2
	\Big]-
	\frac J2 							\|	\dot 	Q	\|_{H^s}^2 + 
	\frac{a}{2}\varepsilon						\| 			Q	\|_{H^s}^2 -\\-
	\frac{|\tilde\mu_2|^2}{32a(1-\varepsilon)}	\|	\nabla 	v	\|_{H^s}^2 +
	\frac{L}{2}\|	\nabla Q\|_{H^s}^2
	\lesssim 
	\Big(
		\|	\nabla 	v	\|_{H^s}^2 +
		\|			Q	\|_{H^s}^2
	\Big)
	\Big(
		\|			Q	\|_{H^s}^2 +
		\|			v	\|_{H^s}^2
	\Big)+\\+c_{\beta_4}
	\|	\nabla 	v	\|_{H^s}^2 +c_{\mu_1}
	\|	\dot 	Q	\|_{H^s}^2 +c_a
	\|			Q	\|_{H^s}^2 + c
	\|	\nabla	Q	\|_{H^s}^2.
\end{aligned}
\end{equation}

Finally, taking the sum between \eqref{Hs_est1}, \eqref{Hs_est2} and \eqref{Hs_est3} and assuming $c_{\beta_4}$, $c_{\mu_1}$, $c$ and $c_a$ small enough, we get for a suitable $\delta>0$ small enough:
\begin{equation*}
\begin{aligned}
	\frac{\dd }{\dd t}
	\Big[
		\frac 12				\|				v	\|_{H^s}^2 + 
		\frac J4				\|	Q	+ \dot	Q	\|_{H^s}^2 +	
		\frac J4				\|		  \dot	Q	\|_{H^s}^2 +
		\big(\frac{a}{2}+\mu_1-\frac J4\big)	\|				Q	\|_{H^s}^2 +
								\frac{L}{2}\|		\nabla	Q	\|_{H^s}^2
	\Big] +\\+
	\big(
		\beta_4 - \frac{|\tilde\mu_2|^2}{32(1-\varepsilon)a}-\delta
	\big)
	\|	\nabla	v	\|_{H^s}^2	+
	\big(
		\mu_1 - \frac{J}{2}-\delta
	\big)
	\|	\dot 	Q	\|_{H^s}^2+
	(\frac{a}{2}\varepsilon-\delta)\|	Q	\|_{H^s}^2 +
	(\frac{L}{2}-\delta) \|	\nabla Q	\|_{H^s}^2\\
	\lesssim
	\Big(	
		\|	\nabla	v	\|_{H^s}^2 +
		\|	\dot 	Q	\|_{H^s}^2 +
		\|			Q	\|_{H^s}^2 +
		\|	\nabla	Q	\|_{H^s}^2
	\Big)
	\Big(
		\|				v	\|_{H^s}^2+
		\|				Q	\|_{H^s}^2+
		\|	\dot			Q	\|_{H^s}^2+
		\|	\nabla		Q	\|_{H^s}^2
	\Big),
\end{aligned}	
\end{equation*}
where we have used
\begin{equation*}
	\langle \nabla Q\odot \nabla Q,\,\nabla v\rangle_{L^2_x} + \langle v\cdot \nabla Q,\, \Delta Q\rangle_{L^2_x} =0.
\end{equation*}

\smallskip 

The last estimate allows, under suitable relations on the coefficients, to obtain a differential inequality of the type \eqref{intro:estimate_existence} with $\Phi(t):=\|				v	\|_{H^s}^2+
		\|				Q	\|_{H^s}^2+
		\|	\dot			Q	\|_{H^s}^2+
		\|	\nabla		Q	\|_{H^s}^2$ and $\Psi(t):=\|	\nabla	v	\|_{H^s}^2 +
		\|	\dot 	Q	\|_{H^s}^2 +
		\|			Q	\|_{H^s}^2 +
		\|	\nabla	Q	\|_{H^s}^2$ which allows to control apriori these norms globally in time, for small data (see
also the Lemma~\ref{appx:lemma1} in the Appendix).

\subsection{Strong solutions}

\vspace{1ex}

\smallskip
\par {\bf Proof of Theorem~\ref{intro:thm2}:} We divide the proof into the existence and uniqueness parts. The existence is based on a Friedrichs-type scheme that preserves the structure exhibited in the higher-order energy laws, which allows to construct approximate solutions.  The uniqueness is achieved afterwards through to an $H^s$-type energy estimate.

\smallskip
\textbf{Existence part:}
In order to construct global strong solutions, we use the classical Friedrichs scheme and  obtain estimates similar to the ones in the previous section. We define the mollifying operator
\begin{equation*}
	 J_nf(\xi):=\mathcal{F}^{-1}\left(1_{\{2^{-n}\leq|\xi|\leq 2^n\}}\mathcal{F}f\right),
\end{equation*}

The approximate momentum equation then reads as follows:
\begin{equation}
\begin{aligned}
	&J_n \partial_t v^{(n)} + \LP J_n\left(J_n v^{(n)}\cdot\nabla J_n v^{(n)}\right)+\frac{\beta_4}{2}\Delta J_n \vn =  
	- L\nabla	\cdot	\LP\Big\{ 				J_n \big( 	\nabla J_n \Qn	\otimes	\nabla J_n \Qn \big)	\Big\}	 							+\\& 
	+ \nabla 	\cdot 	\LP\Big\{	\beta_1 	J_n\big( J_n\Qn\trc\{(J_n\Qn J_n\An)\big) + 	\beta_5J_n\big( J_n\An J_n\Qn \big) 		+ 
																								\beta_6J_n\big( J_n\Qn J_n\An \big)\Big\} 	\\\ &
	+ \nabla	\cdot	\LP\Big\{\;\, \frac{\mu_2}{2}\big(J_n\dot Q^{(n)}	- 	J_n[J_n\Omn,\,J_n \Qn]\big)
								+ \mu_1 J_n\big[ J_n\Qn,\,(J_n\dot Q^{(n)} - [J_n\Omn,\,J_n\Qn]\big)] \;\,\,\Big\},
\end{aligned}
\end{equation}
where $\LP$ denotes the Leray projector onto divergence-free vector fields, and we denote $$\Omega^{(n)}_{ij}:=\frac{v^{(n)}_{i,j}-v^{(n)}_{j,i}}{2},A^{(n)}_{ij}:=\frac{v^{(n)}_{i,j}+v^{(n)}_{j,i}}{2},\,i,j=1,\dots,d$$

Similarly, the approximate order tensor equation is
\begin{equation*}
\begin{aligned}
	J J_n  \ddot Q^{(n)} &+ \mu_1J_n  \dot Q^{(n)}  = L\Delta J_n \Qn 
												-aJ_n\Qn 
												+ b  J_n(J_n\Qn J_n\Qn) \\&
												-b \trc\big\{ J_n(J_n\Qn J_n\Qn)\big\}\frac{\Id}{d} 
												+c J_n\Big( J_n\Qn \trc\{ (J_n\Qn J_n\Qn) \}\Big)\\
												&+\frac{\tilde\mu_2}{2} J_n A^{(n)}+\mu_1 J_n[J_n\Omega^{(n)}, J_n Q^{(n)}]
\end{aligned}
\end{equation*}   where we have used the abuse of notation
\begin{equation*}
	\dot f^{(n)} := \partial_t f^{(n)} + J_n (J_n\vn \cdot \nabla J_n f^{(n)} ).
\end{equation*}
The system above can be regarded as an ordinary differential equation in $L^2$ verifying the conditions of the Cauchy-Lipschitz theorem. Thus it
admits a unique maximal solution $(\vn,\,\Qn)$ in $C^1([0,T^n), L^2)$. As we have $(\LP J_n )^2 = \LP J_n $ and $J_n^2 = J_n$, the pair 
$(J_n \vn,\,J_n\Qn)$ is also a solution of the previous system. Hence, by uniqueness we get that $(J_n \vn,\,J_n\Qn)=(\vn,\,\Qn)$, and moreover the pair
$(\vn,\,\Qn)$ belongs to $C^1 ([0, T^n ), H^{\infty})$ and  solves the system
\begin{equation}\label{appx_system}
\left\{
\begin{aligned}
	& \partial_t v^{(n)}+\LP J_n(v^{(n)}\cdot\nabla v^{(n)}) + \frac{\beta_4}{2}\Delta  \vn =  
	-L \nabla	\cdot	\LP\Big\{ 				J_n \big( 	\nabla  \Qn	\otimes	\nabla  \Qn \big)	\Big\}	 							+\\& 
	+ \nabla 	\cdot 	\LP\Big\{	\beta_1 	J_n\big( \Qn\trc\{\Qn \An\}\big) + 	\beta_5J_n\big( \An \Qn \big) 		+ 
																								\beta_6J_n\big( \Qn \An \big)\Big\} 	+\\\quad &
	+ \nabla	\cdot	\LP\Big\{\;\, \frac{\mu_2}{2}\big(\dot Q^{(n)}	- 	J_n[\Omn,\,\Qn]\big)
								+ \mu_1 J_n\big[ \Qn,\,(\dot Q^{(n)} - [\Omn,\,\Qn]\big)\big] \;\,\,\Big\},\\
	&J   \ddot Q^{(n)} + \mu_1 \dot Q^{(n)}  = L\Delta  \Qn 
												-a\Qn 
												+ b  J_n(\Qn \Qn) \\&\quad\quad\quad\quad\quad
												-b \trc\big\{ J_n(\Qn \Qn)\big\}\frac{\Id}{d} 
												+c J_n\Big( \Qn \trc\{(\Qn \Qn) \}\Big)\\
												&\quad\quad\quad\quad\quad+\frac{\tilde\mu_2}{2} A^{(n)}+\mu_1 J_n[\Omega^{(n)},  Q^{(n)}]
\end{aligned}
\right.
\end{equation}
Arguing similarly as in the proof of the apriori estimates and taking advantage of the fact that $J_n$ is a self-adjoint operator in $L^2$, in order to obtain similar cancellations we get (for a $\delta>0$ suitably small):
\begin{equation}\label{final_est_Hsn}
\begin{aligned}
	\frac{\dd }{\dd t}
	\Big[
		\frac 12				\|				\vn		\|_{H^s}^2 + 
		\frac J4				\|	\Qn	+ \dot	Q^{(n)}	\|_{H^s}^2 +	
		\frac J4				\|		  \dot	Q^{(n)}	\|_{H^s}^2 +
		\big(\frac{a}{2}+\mu_1-\frac J4\big)	\|				\Qn		\|_{H^s}^2 +
								\frac L2\|		\nabla	\Qn		\|_{H^s}^2
	\Big]\\+
	\big(
		\beta_4 - \frac{|\tilde\mu_2|^2}{32(1-\varepsilon)a}-\delta
	\big)
	\|	\nabla	\vn	\|_{H^s}^2	+
	\big(
		\mu_1 - \frac{J}{2}-\delta
	\big)
	\|	\dot 	Q^{(n)}	\|_{H^s}^2+
	(\frac{a}{2}\varepsilon-\delta)\|	\Qn	\|_{H^s}^2 
	+(\frac L2 -\delta)\|	\nabla \Qn	\|_{H^s}^2 \\
	\lesssim
	\Big(	
		\|	\nabla	\vn	\|_{H^s}^2 +
		\|	\dot 	Q^{(n)}	\|_{H^s}^2 +
		\|			\Qn	\|_{H^s}^2 +
		\|	\nabla	\Qn	\|_{H^s}^2
	\Big){\scriptstyle \times}\\{\scriptstyle \times}
	\Big(
		\|				\vn	\|_{H^s}^2+
		\|				\Qn	\|_{H^s}^2+
		\|	\dot Q^{(n)}	\|_{H^s}^2+
		\|	\nabla		\Qn	\|_{H^s}^2
	\Big),
\end{aligned}	
\end{equation}

Defining the functions $x(t)$ and $y(t)$ by
\begin{align*}
	x(t)&:=	
		\|	\nabla	\vn	\|_{H^s}^2 +
		\|	\dot 	Q^{(n)}	\|_{H^s}^2 +
		\|			\Qn	\|_{H^s}^2 +
		\|	\nabla	\Qn	\|_{H^s}^2,\\
	y(t)&:=\|				\vn	\|_{H^s}^2+
		\|				\Qn	\|_{H^s}^2+
		\|	\dot	Q^{(n)}	\|_{H^s}^2+
		\|	\nabla		\Qn	\|_{H^s}^2,
\end{align*}
respectively, and thanks to Lemma \ref{appx_lemma_ineq} and inequality \eqref{final_est_Hsn}, we get the following bound:
\begin{align*}
	\sup_{t\in\RR_+}&
	\Big\{	
		\|				\vn(t)	\|_{H^s}^2+
		\|				\Qn(t)	\|_{H^s}^2+
		\|	\dot		Q^{(n)} (t)	\|_{H^s}^2+
		\|	\nabla		\Qn(t)	\|_{H^s}^2
	\Big\} + \\&
	+
	\int_{\RR_+}
	\Big\{
		\|	\nabla	\vn(t)		\|_{H^s}^2 +
		\|	\dot 	Q^{(n)}(t)	\|_{H^s}^2 +
		\|			\Qn(t)		\|_{H^s}^2 +
		\|	\nabla	\Qn(t)		\|_{H^s}^2
	\Big\}\dd t\\
	&\quad\quad\quad\quad\quad\quad\quad\quad\quad\quad\quad
	\lesssim
		\|				v_0	\|_{H^s}^2+
		\|				Q_0	\|_{H^s}^2+
		\|	\dot		Q_0	\|_{H^s}^2+
		\|	\nabla		Q_0	\|_{H^s}^2.
\end{align*}

We claim that these uniform estimates allow us to pass to the limit as, $n$ goes to $\infty$. We  first observe that we can obtain a uniform bound also  for $\partial_t Q^n$ in $L^\infty_{t}H^s$. Indeed
\begin{equation*}
\begin{aligned}
	\sup_{t\in\RR_+}\| \partial_t Q^n \|_{H^s}
	&=
	\sup_{t\in\RR_+}\| \dot Q^n - \vn\cdot\nabla\Qn\|_{H^s}
	\leq
	\| \dot Q^n \|_{L^\infty_t H^s} + \| v^n \|_{L^\infty_t H^s}\| \nabla Q^n \|_{L^\infty_t H^s} \\
	&\lesssim 
	\|				v_0	\|_{H^s}^2+
		\|				Q_0	\|_{H^s}^2+
		\|	\dot		Q_0	\|_{H^s}^2+
		\|	\nabla		Q_0	\|_{H^s}^2.
\end{aligned}
\end{equation*}

Thus, by classical compactness, weak convergence arguments and thanks to the Aubin-Lions lemma, there exists
\begin{equation*}
	Q\in L^\infty_t H^{s+1}\cap L^2_t H^{s+1},\quad v\in L^\infty_t H^s\cap L^2_t H^{s+1},\quad
	\text{and}\quad \omega  \in L^\infty_tH^s\cap L^2_tH^s,
\end{equation*}
such that, up to a subsequence, we have the following convergences
\begin{equation*}
\begin{alignedat}{30}
			&				\Qn 		&&&&\rightarrow	 			&&&&&&	Q 		&&&&&&&&&&&&	\quad\text{strong in}\quad		L^\infty_{t,loc}H^{s+1-\mu}_{loc}		\\
			&		\dot 	Q^{(n)}		&&&&\rightarrow				&&&&&&	\omega	&&&&&&&&&&&&	\quad\text{strong in}\quad		L^\infty_{t,loc}H^{s-\mu}_{loc}		\\
		 	&				\vn			&&&&\rightarrow				&&&&&&	v		&&&&&&&&&&&&	\quad\text{strong in}\quad		L^\infty_{t,loc}H^{s-\mu}_{loc}		\\
	\nabla	&				\vn			&&&&\rightharpoonup	\nabla 	&&&&&&	v		&&&&&&&&&&&&	\quad\text{weak \;  in}\quad		L^2_{t}H^{s}							\\
\end{alignedat}
\end{equation*}
for any suitably small positive constant $\mu$. 

Assuming $s-\mu>d/2$,  we have that   $J_n(\vn\cdot \nabla \Qn)$ 
strongly converges to $v\cdot \nabla Q$ in $L^\infty_{t,loc}H^{s-\mu}_{loc}$, as $n\to\infty$, with  $v\cdot \nabla Q\in L^\infty_t H^s$.
Furthermore
\begin{equation*}
	\partial_t Q = \lim_{n\rightarrow\infty} \partial_t \Qn =\lim_{n\rightarrow\infty} \Big( \dot Q^{(n)} - \vn\cdot\nabla\Qn\Big)
	= \omega -v\cdot \nabla Q \in L^\infty_t H^s,
\end{equation*}
where the limits are considered in the distributional sense. Then, we deduce $\partial_t Q \in L^\infty_t H^s$ and $\omega = \dot Q\in L^\infty_t H^s$.
Finally, the order-tensor equation yields
\begin{align*}
	J\partial_t \dot Q^{(n)} =
	-JJ_n(\vn\cdot \nabla\dot Q^{(n)}) 
	- \mu_1 \dot Q^{(n)} 
	+ \mu_1	J_n [\Omn,\,\Qn]
	+L \Delta \Qn + \frac{\tilde\mu_2}{2} \An -\\
	-a\Qn + b\Big(J_n(\Qn\Qn) - \trc\{(\Qn\Qn)\}\frac{\Id}{d}\Big) - cJ_n(\Qn\trc\{(\Qn\Qn)\},
\end{align*}
hence, observing that 

\begin{align*}
	\| J_n(\vn \cdot \nabla \dot Q^{(n)}) \|_{H^{s-1}}
	&\lesssim
	\| \vn \cdot \nabla \dot Q^{(n)} 		\|_{H^{s-1}}\\
	&=
	\| \nabla \cdot \{\,\vn \otimes  \dot Q^{(n)} \}	\|_{H^{s-1}}\\
	&\lesssim
	\| \vn \otimes  \dot Q^{(n)}	\|_{H^{s}}
	\lesssim 
	\| \vn 			\|_{H^{s}}
	\| \dot Q^{(n)}	\|_{H^{s}},
\end{align*} 
then $\partial_t \dot Q^{(n)}$ belongs to $L^2_{t,loc} H^{s-1}$, with uniformly in $n$ bounded seminorms. Thus
\begin{equation*}
	\partial_t \dot Q^{(n)} \rightharpoonup \partial_t \dot Q\quad \text{weakly in}\quad	L^2_{t,loc} H^{s-1}, 
\end{equation*}
up to a subsequence. Moreover, since $J_n(\vn \otimes  \dot Q^{(n)})$  converges weakly to $v\otimes \dot Q$ in 
$L^2_{t, loc}H^{s}$, then  $J_n(\vn \cdot \nabla \dot Q^{(n)})$   converges weakly to $v\cdot \nabla \dot Q$ in
$L^2_{t, loc}H^{s}$. Then, summarizing we deduce that $\ddot Q^{(n)}$  converges weakly to $\ddot Q$ in 
$L^2_{t, loc}H^{s}$.

\noindent 
These convergences allow us to pass to the limit in the classical solutions of \eqref{appx_system}, deducing that 
$(u,\,Q)$ is classical solution of system \eqref{eqinv:momentum+} and \eqref{eqinv:tensors+}.

\smallskip
\textbf{Uniqueness part:} We now prove the uniqueness of the strong  solutions previously obtained. Let us consider $(u_1,\,Q_1)$ and $(u_2,\,Q_2)$ to be strong solutions with same initial data. From here on we will use the following notation:
\begin{equation*}
	\delta Q := Q_1-Q_2,\quad \delta \dot Q:= \dot Q_1- \dot Q_2,\quad \delta v:= v_1-v_2,\quad \delta A:=A_1-A_2,\delta \Omega:=\Omega_1-\Omega_2.
\end{equation*}

\smallskip
\noindent
We begin the proof by considering the difference between the order-parameter equations of the two solutions, namely
\begin{align*}
	J\left[ (\delta \dot Q )_t + v_1 \cdot\nabla \delta \dot Q + \delta v\cdot\nabla \dot Q_2\right] + \mu_1 \delta \dot Q  =
	L\Delta \delta Q - a \delta Q  + 
	b \big[ Q_1 \delta Q + \delta Q Q_2 +\\+ \trc\{Q_1 \delta Q + \delta Q Q_2 \} \frac{\Id}{d} \big] 
	-c \delta Q  \trc\{ Q_1^2 \} - c Q_2 \trc\{ \delta Q Q_1 \} - c Q_2 \trc\{ Q_2 \delta Q \}  +\\
	\frac{\tilde\mu_2}{2} \delta A+ \mu_1 [ \Omega_1,\, \delta Q ] + \mu_1 [ \delta \Omega ,\,Q_2 ].
\end{align*}
We multiply by $\delta \dot Q $, take the trace and integrate over $\RR^d$ to get:
\begin{equation}\label{uniqueness_energy_Q}
\begin{aligned}
	\frac{\dd}{\dd t}
	\Big[
		\frac{J}{2}\| \delta \dot Q \|^2_{L^2_x} +\frac L2 \| \nabla \delta Q \|_{L^2_x}^2 + \frac{a}{2} \| \delta Q \|_{L^2_x}^2
	\Big]	
	 + \mu_1 \| \delta \dot Q \|_{L^2_x}^2 = 
	 		L\langle \Delta \delta Q 					,\,  v_1\cdot \nabla \delta Q	\rangle_{L^2_x}+\\
	 + 		L\langle \Delta \delta Q 					,\,  \delta v\cdot \nabla Q_2	\rangle_{L^2_x}
	 -		J\langle v_1 \cdot\nabla \delta \dot Q		,\,	\delta \dot Q			\rangle_{L^2_x}
	 -		J\langle  \delta v \cdot\nabla \dot Q_2		,\,	\delta \dot Q			\rangle_{L^2_x}- \\
	 -	a	\langle  \delta	 	Q						,\,	
	 						 v_1\cdot \nabla \delta Q+	\delta v \cdot \nabla Q_2	\rangle_{L^2_x}
	 +	b	\langle  Q_1 \delta Q + \delta Q Q_2		,\,	\delta \dot Q				\rangle_{L^2_x}-\\
	 -  c	\langle  	\delta Q  \trc\{ Q_1^2 \} 
	 					+Q_2 \trc\{ \delta Q Q_2 \} 
	 					+ Q_2 \trc\{ Q_2 \delta Q \}	,\,	\delta \dot Q				\rangle_{L^2_x}+\\
	 +		\frac{\tilde\mu_2}{2}\langle \delta A							,\,	\delta \dot Q				\rangle_{L^2_x}
	 +\mu_1	\langle	[ \Omega_1,\, \delta Q ] + 
	 				[ \delta \Omega ,\,Q_2 ]			,\,	\delta \dot Q				\rangle_{L^2_x}.
\end{aligned}
\end{equation} 
We now estimate  each term on the right-hand side. First we remark that
\begin{align*}
	 \langle \Delta \delta Q 							,\,  v_1\cdot \nabla \delta Q			\rangle_{L^2_x} &= 
	 \langle \delta Q_{\alpha\beta,\, jj}				,\, (v_{1})_{i}\delta Q_{\alpha\beta,i}	\rangle_{L^2_x} \\&=
	-\langle \delta Q_{\alpha\beta,\, j}				,\,  (v_{1})_{i,j}\delta Q_{\alpha\beta,i}	\rangle_{L^2_x} 
	\underbrace{
	-\langle \delta Q_{\alpha\beta,\, j}				,\,  (v_{1})_{i}\delta Q_{\alpha\beta,ij}	\rangle_{L^2_x}}_{=0},
\end{align*}
where for the second equality we have integrated by parts. Then we obtain
\begin{align*}
	 \langle \Delta \delta Q 							,\,  v_1\cdot \nabla \delta Q			\rangle_{L^2_x}
	\lesssim 
	\| 	\nabla 	\delta 	Q 	\|_{L^2_x}
	\| 	\nabla	\delta 	Q 	\|_{L^2_x} 
	\|	\nabla 			v_1 \|_{L^\infty_x}
	\lesssim
	\|	\nabla 			v_1 \|_{H^s		}
		\| 	\nabla 	\delta 	Q 	\|_{L^2_x	}^2,
\end{align*}
Similarly, we can proceed integrating by parts also for the second term, namely
\begin{align*}
	\langle \Delta 	\delta Q 					,\,  \delta v\cdot \nabla Q_2							\rangle_{L^2_x} 
	&= 
	\langle 		\delta Q_{\alpha\beta,jj}	,\,  \delta v_{i}\cdot  (Q_{2})_{ \alpha\beta , \,i}	\rangle_{L^2_x} \\
	&=
	\underbrace{
	-\langle 		\delta Q_{\alpha\beta,j}	,\,  \delta v_{i, j}\cdot  (Q_{2})_{ \alpha\beta , \,i}	\rangle_{L^2_x} }_{\mathcal{A}}
	\underbrace{
	-\langle 		\delta Q_{\alpha\beta,j}	,\,  \delta v_{i}\cdot  (Q_{2})_{ \alpha\beta , \,ij}	\rangle_{L^2_x}}_{\mathcal{B}}.
\end{align*}
First, we control $\mathcal{A}$ using a standard estimate:
\begin{align*}
	\mathcal{A}\lesssim \| \nabla \delta Q \|_{L^2_x} \| \nabla \delta v \|_{L^2_x} \| \nabla Q_2 \|_{L^\infty_x}
	\lesssim 
	\| \nabla 			Q_2 \|_{H^s		}^2
	\| \nabla \delta	Q	\|_{L^2_x	}^2 +c_{\beta_4} 
	\| \nabla \delta	 v	\|_{L^2_x}^2.
\end{align*}

The term $\mathcal{B}$ requires a more careful analysis. First, we define the parameter $\theta$ in $(0,1/2]$ as the minimum between $1/2$ and 
$s-d/2$. Thus, since $\Delta Q_2$ belongs to $L^2(\RR_+,H^{s-1}(\RR^d))$, then it belongs also to 
$L^2(\RR_+,H^{\theta+d/2-1}(\RR^d))$. We will make use of the following Sobolev embeddings:\vspace{-0.2cm}
\begin{equation}\label{QIn_embeddings}
\begin{alignedat}{4}
			H^{s-1}			(\RR^d)&\hookrightarrow H^{\theta+d/2-1}(\RR^d)		&&\hookrightarrow 	L^\frac{d}{1-\theta}(\RR^d),\\
		 	H^1				(\RR^d)&\hookrightarrow	 L^\frac{2d}{d-2(1-\theta)}(\RR^d) &&
\end{alignedat}\vspace{-0.2cm}
\end{equation}
Then $\mathcal{B}$ is bounded by
\begin{align*}
	\mathcal{B} &\lesssim
	\|	\nabla		\delta  Q 	\|_{L^2_x}
	\|				\delta  v 	\|_{L^\frac{2d}{d-2(1-\theta)}_x}
	\| 	\Delta				Q_2	\|_{L^\frac{d}{1-\theta}_x}
	\lesssim
	\|	\nabla		\delta  Q 	\|_{L^2_x}
	\|				\delta  v 	\|_{H^1}
	\| 	\Delta				Q_2	\|_{H^{\theta+\frac{d}{2}-1}}\\
	&\lesssim
	\|	\nabla		\delta  Q 	\|_{L^2_x}
	\|				\delta  v 	\|_{L^2}
	\| 	\Delta				Q_2	\|_{H^{s-1}} + 
	\|				\delta  Q 	\|_{L^2_x}
	\|	\nabla		\delta  v 	\|_{L^2}
	\| 	\Delta				Q_2	\|_{H^{s-1}}\\
	&\lesssim
	\| \nabla Q_2 \|_{H^s}^2
	\big(
		\|		\delta  Q 	\|_{L^2_x}^2+
		\|				\delta  v 	\|_{L^2_x}^2
	\big)
	+c_{\beta_4}
	\|	\nabla		\delta  v 	\|_{L^2_x}^2
	+c
	\|	\nabla		\delta  Q 	\|_{L^2_x}^2.
\end{align*}
Summarizing, the second term is estimated as follows:
\begin{equation*}
	\langle \Delta 	\delta Q 					,\,  \delta v\cdot \nabla Q_2							\rangle_{L^2_x} 
	\lesssim
	\| \nabla Q_2 \|_{H^s}^2
	\big(
		\|	\nabla		\delta  Q 	\|_{L^2_x}^2+\|\delta Q\|_{L^2_x}^2+
		\|				\delta  v 	\|_{L^2_x}^2
	\big)
	+c_{\beta_4}
	\|	\nabla		\delta  v 	\|_{L^2_x}^2
	+c
	\|	\nabla		\delta  Q 	\|_{L^2_x}^2.
\end{equation*}
Now, let us observe that $\langle	v_1\cdot \nabla \delta \dot Q ,\, \delta \dot Q \rangle_{L^2_x}	 = 0$ because of the free divergence condition of $v_1$. Moreover, still recalling the embeddings \eqref{QIn_embeddings}, we have
\begin{align*}
	\langle \delta v \cdot \nabla \dot Q_2,\,\delta \dot Q \rangle_{L^2_x}
	&\lesssim
	\|	\delta v 		\|_{L^\frac{2d}{d-2(1-\theta)}_x	}
	\|	\nabla \dot Q_2	\|_{L^\frac{d}{1-\theta}_x		}
	\| \delta\dot Q			\|_{L^2_x						}
	\lesssim	
	\|	\delta v 			\|_{H^1							}
	\|	\nabla \dot Q_2		\|_{H^{s-1}						}		
	\| \delta\dot Q				\|_{L^2_x						}\\
	&\lesssim 
	\|	\dot Q_2			\|_{H^s							}
	\|	\delta v			\|_{L^2							}	
	\| \delta\dot Q				\|_{L^2_x						}+
	\|	\dot Q_2			\|_{H^s							}
	\|	\nabla 	\delta v	\|_{L^2							}	
	\| \delta\dot Q				\|_{L^2_x						}\\
	&\lesssim 
	\| \dot Q_2 \|_{H^s}^2
	\big(
		\| \delta v \|_{L^2_x}^2+ 
		\| \delta\dot Q \|_{L^2_x}^2
	\big) +c_{\beta_x}
	\| \nabla  \delta v 	\|_{L^2_x}^2 + c_{\mu_1}
	\| \delta\dot  Q	 	\|_{L^2_x}^2.
\end{align*}

The remaining terms can easily controlled by the H\"older inequality and the Sobolev embedding $H^s\hookrightarrow L^\infty_x$. First the terms related to the parameter $a$ fulfill
\begin{align*}
	\langle \delta Q, v_1\cdot \nabla \delta Q \rangle_{L^2_x} 
	&\lesssim 
	\| \delta Q				 \|_{L^2_x}
	\|	v_1					\|_{L^\infty}
	\|	\nabla \delta Q 	\|_{L^2_x}
	\lesssim
	\|	v_1					\|_{H^s}
	\big(
		\| \delta Q				 	\|_{L^2_x}^2+
		\|	\nabla \delta Q 		\|_{L^2_x}^2
	\big),\\
	\langle \delta Q, \delta v\cdot\nabla Q_2 \rangle_{L^2_x}
	&\lesssim
	\| \delta Q \|_{L^2_x}
	\| \delta v \|_{L^2_x}
	\| \nabla Q_2 \|_{L^\infty_x}
	\lesssim 
	\| \nabla Q_2 \|_{H^s}
	\big(
		\| \delta Q \|_{L^2_x}^2+
		\| \delta v \|_{L^2_x}^2
	\big),
\end{align*}
The terms related to $b$ can be bounded as follows
\begin{align*}
	\langle  Q_1 \delta Q + \delta Q Q_2		,\,	\delta \dot Q				\rangle_{L^2_x}
	&\lesssim
	\| (Q_1,\,Q_2 )		\|_{L^\infty_x}
	\| \delta	Q		\|_{L^2_x}
	\| \delta \dot	Q	\|_{L^2_x}\\
	&\lesssim
	\| (Q_1,\,Q_2 )		\|_{H^s		}^2
	\| \delta	Q		\|_{L^2_x	}^2+c_{\mu_1}
	\| \delta \dot	Q	\|_{L^2_x	}^2
\end{align*}
and finally the one multiplied by $c$ is estimated by
\begin{align*}
	\langle  	\delta Q  \trc\{ Q_1^2 \} 
	 					+Q_2 \trc\{ \delta Q Q_2 \} 
	 					+ Q_2 \trc\{ Q_2 \delta Q \}	,\,	\delta \dot Q				\rangle_{L^2_x}
	 \lesssim
	 \| (Q_1,\,Q_2 )		\|_{H^s		}^2
	 \big(
	 	\| \delta		Q		\|_{L^2_x	}^2+
	 	\| \delta \dot	Q		\|_{L^2_x	}^2
	 \big).
\end{align*}
It remains to control the terms related to  $\mu_1$ and $\mu_2$ which can be handled through
\begin{align*}
	\langle \delta A							,\,	\delta \dot Q				\rangle_{L^2_x}
	\lesssim
	\| \delta A 		\|_{L^2_x}
	\| \delta \dot Q	\|_{L^2_x}
	\lesssim
	\| \delta \dot Q	\|_{L^2_x}^2 +c_{\beta_4}
	\| \nabla \delta v 	\|_{L^2_x}^2
\end{align*}
and
\begin{align*}
	\langle	[ \Omega_1,\, \delta Q ] + 
	 				[ \delta \Omega ,\,Q_2 ]			,\,	\delta \dot Q				\rangle_{L^2_x}
	\lesssim
	\big(
		\| \nabla 	v_1 	\|_{H^s	}^2+
		\| 			Q_2 	\|_{H^s	}^2
	\big)
	\big(
		\| \delta		 Q		\|_{L^2_x}^2+
		\| \delta \dot 	Q		\|_{L^2_x}^2
	\big) + \\+
	c_{\mu_1}
	\| \delta \dot Q \|_{L^2_x}^2 + 
	c_{\beta_4}
	\| \nabla \delta v \|_{L^2_x}^2.
\end{align*}
Using all the previous estimates in the equality \eqref{uniqueness_energy_Q}, we obtain
\begin{equation}\label{uniq_part1}
\begin{aligned}
	\frac{\dd}{\dd t}
	\Big[
		\frac{J}{2}\| \delta \dot Q \|^2_{L^2_x} +\frac{L}{2} \| \nabla \delta Q \|_{L^2_x}^2 + \frac{a}{2} \| \delta Q \|_{L^2_x}^2
	\Big]	
	 + \mu_1 \| \delta \dot Q \|_{L^2_x}^2 
	 \lesssim 
	 \Big(
	 	1+
	 	\| 	Q_2			\|_{	H^s	}^2+
	 	\| 	\nabla v_1	\|_{	H^s }^2+\|v_1\|_{H^s}^2+\\+
		\|\dot Q_2\|_{H^s}^2+
	 	\| 	\nabla Q_2	\|_{	H^s }^2+
	 	\| 	Q_1			\|_{	H^s	}^2
	 \Big)
	 \Big(
	 	\|	\delta v			\|_{L^2_x}^2+
	 	\|	\delta \dot Q	\|_{L^2_x}^2+
	 	\|	\delta Q			\|_{L^2_x}^2+
	 	\|	\nabla	\delta Q	\|_{L^2_x}^2 
	 \Big)+\\+c_{\beta_4}
	 \| \nabla \delta v \|_{L^2_x}^2 + 
	 c_{\mu_1} \| \delta \dot Q \|_{L^2_x}.
\end{aligned}
\end{equation}
Now let us consider the difference between the momentum equations of the two solutions, namely
\begin{equation}\label{uniqueness_energy_u}
\begin{aligned}
	\partial_t \delta v + v_1 \cdot \nabla \delta v + \delta v \cdot \nabla v_2 - \frac{\beta_4}{2}\Delta \delta v = 
	- L\nabla\cdot \Big\{ \nabla \delta Q \otimes \nabla Q_1 + \nabla Q_2 \otimes \nabla\delta Q \Big\}-\\
	+\beta_1 \nabla\cdot \Big\{ \trc\{ \delta QA_1 \} Q_1 + \trc\{ Q_2 \delta A \} Q_1+\trc\{ Q_2A_2 \}\delta  Q \Big\} 
	+\beta_5 \nabla\cdot \big\{	A_1 \delta Q + \delta A Q_2 		\big\}	+ \\
	+\beta_6 \nabla\cdot \big\{	\delta Q  A_1+ Q_2	\delta A 	\big\}	\Big\}
	+ \frac{\mu_2}{2} \nabla \cdot \Big\{ \delta \dot Q - [ \delta \Omega,\, Q_1] -  [  \Omega_2,\, \delta Q] \Big\}+\\
	+\mu_1	\nabla \cdot \big\{ 	[\delta Q	,\, (\dot Q_1 - [ \Omega_1,\, Q_1])] +
		    						[ 		Q_2	,\,	(\delta \dot Q - [ \delta \Omega,\, Q_1] -[ \Omega_2,\, \delta Q])]\Big\}.
\end{aligned}
\end{equation}
We proceed similarly as before, multiplying scalarly by $\delta v$ and integrating everything over $\RR^d$, and by parts, to obtain
\begin{equation}
\begin{aligned}
	\frac{1}{2}\frac{\dd}{\dd t}\| \delta v \|_{L^2_x}^2 + \frac{\beta_4}{2}\| \nabla \delta v \|_{L^2_x}^2  = 
	L \langle		\nabla \delta Q \otimes \nabla Q_1 + 
				\nabla Q_2 \otimes \nabla \delta Q				,\, \nabla	\delta v \rangle_{L^2_x}+\\
	-\beta_1
	 \langle		\trc\{ \delta QA_1 \} 		Q_1 +
				\trc\{ 		Q_2A_2 \}\delta Q 				,\, \nabla	\delta v \rangle_{L^2_x}
	+\beta_1
	 \langle		\trc\{  Q_2\delta A \} 		Q_1			,\, \nabla	\delta v \rangle_{L^2_x}-\\
	-\beta_5
	\langle			A_1 \delta Q + \delta A Q_2 				,\, \nabla	\delta v \rangle_{L^2_x}
	-\beta_6
	\langle				\delta Q  A_1+ Q_2	\delta A 		,\, \nabla	\delta v \rangle_{L^2_x}
	-\frac{\mu_2}{2}
	\langle 			\delta \dot Q							,\, \nabla	\delta v \rangle_{L^2_x}+\\
	+\frac{\mu_2}{2}
	\langle [ \delta \Omega,\, Q_1] +[  \Omega_2,\, \delta Q],\, \nabla	\delta v \rangle_{L^2_x}
	-\mu_1
	\langle			[\delta Q	,\,\dot Q_1 ]				,\, \nabla	\delta v \rangle_{L^2_x}
	-\mu_1
	\langle			[ Q_2	,\, \delta \dot Q ]				,\, \nabla	\delta v \rangle_{L^2_x}-\\
	+\mu_1	
	\langle		[ Q_2	,\,[ \delta \Omega,\, Q_1]
							+[ \Omega_2,\, \delta Q]]		,\, \nabla	\delta v \rangle_{L^2_x}
	+\mu_1 \langle \left[\delta Q, [\Omega_1,Q_1]],\nabla \delta v\right]\rangle_{L^2_x}\\						
	-\langle				v_1	\cdot	\nabla	\delta	v		,\,			\delta v \rangle_{L^2_x}
	-\langle		\delta 	v	\cdot	\nabla			v_2		,\,			\delta v \rangle_{L^2_x}	,
\end{aligned}
\end{equation}
We proceed by estimating each term on the right-hand side. First we have
\begin{align*}
	 \langle		\nabla \delta Q \otimes \nabla Q_1 + 
				\nabla Q_2 \otimes \nabla \delta Q				,\, \nabla	\delta v \rangle_{L^2_x} 
	&\lesssim
	\Big(	
		\| \nabla Q_1 \|_{L^\infty_x} +
		\| \nabla Q_2 \|_{L^\infty_x} 
	\Big)
	\| \nabla \delta Q \|_{L^2_x}	
	\| \nabla \delta v\|_{L^2_x}\\
	&\lesssim
	\Big(	
		\| \nabla Q_1 \|_{H^s}^2 +
		\| \nabla Q_2 \|_{H^s}^2 
	\Big)
	\| \nabla \delta Q \|_{L^2_x}^2+c_{\beta_4}	
	\| \nabla \delta v\|_{L^2_x}^2,
\end{align*}
while the terms concerning $\beta_1$ are handled by
\begin{align*}
	\langle		\trc\{ \delta QA_1 \} 		Q_1 &+
				\trc\{ 		Q_2A_2 \}\delta Q 				,\, \nabla	\delta v \rangle_{L^2_x}
	\lesssim\\
	&\lesssim
	\Big(
		\| \nabla u_1 \|_{L^\infty_x} \| Q_1 \|_{L^\infty_x} +
		\| \nabla u_2 \|_{L^\infty_x} \| Q_2 \|_{L^\infty_x}
	\Big)
	\|  \delta 			Q \|_{L^2_x}
	\|	\nabla\delta 	v \|_{L^2_x}\\
	&\lesssim
	\Big(
		\| \nabla v_1 \|_{H^s}^2 \| Q_1 \|_{H^s}^2 \;+\;
		\| \nabla v_2 \|_{H^s}^2 \| Q_2 \|_{H^s}^2
	\Big)
	\|  \delta 			Q \|_{L^2_x}^2+c_{\beta_4}
	\|	\nabla\delta 	v \|_{L^2_x}^2,
\end{align*}
and
\begin{align*}
	 \langle		\trc\{  Q_2\delta A \} 		Q_1			,\, \nabla	\delta v \rangle_{L^2_x}	
	 \lesssim
	 \| Q_1 \|_{L^\infty_x}\| Q_2 \|_{L^\infty_x} \| \nabla \delta v \|_{L^2_x}^2 
	 \lesssim
	 \| Q_1 				\|_{H^s}
	 \| Q_2 				\|_{H^s}
	 \| \nabla \delta v \|_{L^2_x}^2.
\end{align*}
Now, we bound the terms related to $\beta_5$ and $\beta_6$ as follows:
\begin{align*}
	\langle			A_1 \delta Q + \delta A Q_2 				,\, \nabla	\delta v \rangle_{L^2_x}
	&\lesssim
	\| \nabla v_1 		\|_{L^\infty	}
	\| \delta Q	  		\|_{L^2_x	}
	\| \nabla \delta v 	\|_{L^2_x	} + 
	\|  Q_2				\|_{L^\infty	}
	\| \nabla \delta v	\|_{L^2_x	}^2\\
	&\lesssim
	\| \nabla v_1 		\|_{H^s		}^2
	\| \delta Q	  		\|_{L^2_x	}^2+
	\big(
		c_{\beta_4}+
		\| Q_2\|_{H^s}
	\big)
	\| \nabla \delta v 	\|_{L^2_x	}^2,
\end{align*}
\begin{align*}
	\langle				\delta Q  A_1+ Q_2	\delta A 		,\, \nabla	\delta v \rangle_{L^2_x}
	&\lesssim
	\|	\delta Q			\|_{L^2_x		}
	\|	\nabla v_1		\|_{L^\infty_x	}
	\|	\nabla \delta v	\|_{L^2_x		} +
	\|		   Q_2		\|_{L^\infty_x	}
	\|	\nabla \delta v	\|_{L^2_x		}^2\\
	&\lesssim
	\| \nabla v_1 		\|_{H^s		}^2
	\| \delta Q	  		\|_{L^2_x	}^2+
	\big(
		c_{\beta_4}+
		\| Q_2\|_{H^s}
	\big)
	\| \nabla \delta v 	\|_{L^2_x	}^2.
\end{align*}
Now, we move on and  bound the terms related to $\mu_2$ by
$$	\langle 			\delta \dot Q							,\, \nabla	\delta v \rangle_{L^2_x}
	\lesssim
	\| \delta \dot Q	\|_{L^2_x}^2+c_{\beta_4}\| \nabla \delta v 	\|_{L^2_x	}^2,
$$
\begin{align*}
	\langle [ \delta \Omega,\, Q_1] +[  \Omega_2,\, \delta Q],\, \nabla	\delta v \rangle_{L^2_x}
	&\lesssim
	\| 					Q_1	\|_{L^\infty_x	}
	\| \nabla \delta 	v	\|_{L^2_x		}^2+
	\| \nabla			v_2 \|_{L^\infty_x	}
	\|		  \delta		Q	\|_{L^2_x		}
	\| \nabla \delta 	v	\|_{L^2_x		}\\
	&\lesssim
	\| \nabla			v_2 \|_{H^s_x	}^2
	\|		  \delta		Q	\|_{L^2_x		}^2+	
	\big(
		c_{\beta_4}+
		\| 					Q_1	\|_{H^s_x	}
	\big)
	\| \nabla \delta v	\|_{L^2_x}^2,
\end{align*}
while the terms related to $\mu_1$ can be handled by
\begin{align*}
	\langle			[\delta Q	,\,\dot Q_1 ]				,\, \nabla	\delta v \rangle_{L^2_x}
	&\lesssim
	\|	\delta Q			\|_{L^2_x		}
	\|	\dot Q_1			\|_{L^\infty_x	}
	\|	\nabla \delta v	\|_{L^2_x		}
	\lesssim
	\|	\dot Q_1			\|_{H^s			}^2
	\|	\delta Q			\|_{L^2_x		}^2+c_{\beta_4}
	\|	\nabla \delta v	\|_{L^2_x		}^2,\\
	\langle			[ Q_2	,\, \delta \dot Q ]				,\, \nabla	\delta v \rangle_{L^2_x}
	&\lesssim
	\| Q_2 				\|_{L^\infty_x}
	\| \delta \dot Q 	\|_{L^2_x}
	\| \nabla \delta v 	\|_{L^2_x}
	\lesssim
	\| Q_2 				\|_{H^s		}^2
	\| \delta \dot Q 	\|_{L^2_x	}^2+c_{\beta_4}
	\| \nabla \delta v 	\|_{L^2_x	}^2
\end{align*}
and also
\begin{align*}
	\langle		[ Q_2	,\,[ \delta \Omega,\, Q_1]
							&+[ \Omega_2,\, \delta Q]]		,\, \nabla	\delta v \rangle_{L^2_x}\\
	&\lesssim
	\|			Q_2			\|_{L^\infty_x	}
	\|			Q_1			\|_{L^\infty_x	}
	\|	\nabla	\delta v		\|_{L^2_x		}^2 +
	\|			Q_2			\|_{L^\infty_x	}
	\|	\nabla	v_2			\|_{L^\infty_x	}
	\|	\delta	Q			\|_{L^2_x		}
	\|	\nabla	\delta v		\|_{L^2_x		}\\
	&\lesssim
	\|			Q_2			\|_{H^s_x	}^2
	\|	\nabla	v_2			\|_{H^s_x	}^2
	\|	\delta	Q			\|_{L^2_x	}^2 +
	\Big(
		c_{\beta_4} +
		\|			Q_2		\|_{H^s_x	}
		\|			Q_1		\|_{H^s_x	}
	\Big)
	\| \nabla \delta v \|_{L^2_x}^2,
\end{align*}

\begin{align*}
\langle [\delta Q,[\Omega_1,Q_1]],\nabla\delta v\rangle_{L^2_x}&\le \|Q_1\|_{L^\infty}\|\nabla v_1\|_{L^\infty}\|\delta Q\|_{L^2_x}\|\nabla \delta v\|_{L^2_x}\non\\
&\le \|Q_1\|_{H^s}^2\|\nabla v_1\|_{H^s}^2\|\delta Q\|_{L^2}^2+c_{\beta_4}\|\nabla\delta v\|_{L^2}^2
\end{align*}
Finally, let us remark that $	\langle		v_1	\cdot	\nabla	\delta	v		,\,			\delta v \rangle_{L^2_x}=0$ and
\begin{align*}
	\langle		\delta 	v	\cdot	\nabla			v_2		,\,			\delta v \rangle_{L^2_x}
	\lesssim
	\| 	\nabla v_2	\|_{L^\infty_x	}
	\|	\delta v 	\|_{L^2_x		}^2
	\lesssim
	\| 	\nabla v_2	\|_{H^s			}
	\|	\delta v 	\|_{L^2_x		}^2.
\end{align*}
Thus, summarizing all the previous estimates and using them in \eqref{uniqueness_energy_u} we get
\begin{equation}\label{uniq_part2}
\begin{aligned}
	\frac{1}{2}\frac{\dd}{\dd t}\|\delta v \|_{L^2_x}^2 + \frac{\beta_4}{2}\| \nabla \delta v \|_{L^2_x}^2  
	\lesssim &
	\Big\{
		1+
		\| \nabla v_2 \|_{H^s}+
		\| \nabla v_1 \|_{H^s}^2+
		\| \nabla v_2 \|_{H^s}^2+
		\| \nabla Q_1 \|_{H^s}^2\\ &+
		\|Q_2\|_{H^s}^2+\| \nabla Q_2\|_{H^s}^2+ 
		\| \nabla v_1 \|_{H^s}^2 \| Q_1 \|_{H^s}^2+
		\| \nabla v_2 \|_{H^s}^2 \| Q_2 \|_{H^s}^2+
		\| \dot	  Q_1 \|_{H^s}^2
	\Big\}\\
	&\times\Big\{
		\| 			\delta v \|_{L^2_x}^2+
		\| \nabla 	\delta Q \|_{L^2_x}^2+
		\| 		 	\delta Q \|_{L^2_x}^2+
		\| \delta\dot Q	 \|_{L^2_x}^2
	\Big\}+\\
	&+\Big\{
		c_{\beta_4} 
		+
		\|			Q_2		\|_{H^s_x	}
		\|			Q_1		\|_{H^s_x	}+
		\|			Q_1		\|_{H^s_x	}+
		\|			Q_2		\|_{H^s_x	}
	\Big\}
	\| \nabla \delta v \|_{L^2_x}^2
\end{aligned}
\end{equation}
Now, defining the functions $\Psi= \Psi(t)$ and $f=f(t)$ by
\begin{align*}
	\Psi	&:= 
		\frac 12\|\delta v \|_{L^2_x}^2+
		\frac J2\| \delta \dot Q \|^2_{L^2_x} +\frac{L}{2} \| \nabla \delta Q \|_{L^2_x}^2 + \frac{a}{2} \| \delta Q \|_{L^2_x}^2\\
	f 	&:=	\Big\{
		1+
		\| Q_1 \|_{H^s}^2
		\| \nabla v_2 \|_{H^s}^2+
		\| \nabla v_1 \|_{H^s}^2+
		\| \nabla v_2 \|_{H^s}^2+
		\| \nabla Q_1 \|_{H^s}^2+
		\|Q_2\|_{H^s}^2+
		\\ &\quad\quad+
		\| \nabla Q_2\|_{H^s}^2+ 
		\| \nabla v_1 \|_{H^s}^2 \| Q_1 \|_{H^s}^2+
		\| \nabla v_2 \|_{H^s}^2 \| Q_2 \|_{H^s}^2+
		\| \dot	  Q_1 \|_{H^s}^2 +\\
		& \quad\quad +\|\dot Q_2\|_{H^s}^2+\|Q_1\|_{H^s}^2+\|v_1\|_{H^s}^2
	\Big\},
\end{align*}
and observing that $f \in L^1_{loc}(\RR_+)$, we finally take the sum between \eqref{uniq_part1} and \eqref{uniq_part2}, obtaining
\begin{align*}
	\frac{\dd}{\dd t}\Psi &+ \mu_1\| \dot \delta Q \|_{L^2_x} + \frac{\beta_4}{2}\| \nabla \delta v  \|_{L^2_x}   
	\lesssim
	f \Psi  + c_{\mu_1}\| \delta  \dot Q \|_{L^2_x} + \\&+
	\Big\{
		c_{\beta_4} +
		\|			Q_2		\|_{H^s_x	}
		\|			Q_1		\|_{H^s_x	}+
		\|			Q_1		\|_{H^s_x	}+
		\|			Q_2		\|_{H^s_x	}
	\Big\}
	\| \nabla \delta v \|_{L^2_x}^2.
\end{align*}
Hence, assuming $c_{\beta_4}$, $c_{\mu_1}$ and the initial data small enough, we can absorb by the left-hand side the terms related to $\| \delta  \dot Q \|_{L^2_x}$ and $\| \nabla \delta v \|_{L^2_x}$ on the right-hand side, so that the following inequality is fulfilled:
\begin{equation*}
	\frac{\dd}{\dd t}\Psi \lesssim f \Psi.
\end{equation*}
Then, since $\Psi(0)=0$, the Gronwall's inequality yields $\Psi$ to be constantly null, especially
\begin{equation}
	\delta v = v_1-v_2 =0\quad\text{and}\quad \delta Q =Q_1 - Q_2=0.
\end{equation}
This concludes the proof of Theorem~\ref{intro:thm2}. $\Box$
\section{Twist waves}\label{sec4}

 In the following we consider an example of a ``twist-wave" solution, that is a solution of the coupled system for which the flow $v$ is zero. As noted in the introduction, this amounts to determining a solution of the $Q$-tensor equation \eqref{eqinv:tensors+} with zero flow (hence $v=\Omega=A=0$, $\dot Q$ becomes  $\partial_t Q$) but satisfying {\it an additional nonlinear constraint} namely \eqref{eq:Qtwistconst0}.
 
Our ansatz is inspired from  stationary-case studies \cite{meltinghedgehog}, namely {\it the``melting-hedgehog wave"} obtained by 
taking in $\R^d$ with $d=2$ or $d=3$ the ansatz: 

$$T(t,x):=f(t,|x|)\bar H(x)$$ with $f:\R_+\times \R\to \R$ a function to be determined and $\bar H$ the {\it ``hedgehog"} function (see \cite{meltinghedgehog} for details about its physical significance) :

$$\bar H_{ij}(x):=\frac{x_ix_j}{|x|^2}-\frac{\delta_{ij}}{d}, i,j=1,\dots,d$$ 

 Let us note that in order to avoid a discontinuity at $0$ for $T$ we need to take

\be\label{smelting}
f(t,0)=0,\forall t\ge 0
\ee i.e. the hedgehog ``melts" at the origin.

Then one can check that the equation \eqref{eq:Qwave0} reduces to an equation for $f$ only, namely:

\be\label{eq:s}
Jf_{tt}+\mu_1 f_t=L\left(f_{rr}+f_r\frac{d-1}{r}-\frac{2d}{r^2}f\right)-af+\frac{b(d-2)}{d}f^2-\frac{c(d-1)}{d}f^3
\ee

Note that in order to avoid a singularity at the origin for $f$ we need to further have that 

\be\label{smelting+}
f_r(t,0)=0,\forall t\ge 0
\ee 

Condition \eqref{smelting+} is an apparent singularity, having to do with the radial symmetry, because if one denotes $G(t,x)=f(t,|x|)$ then \eqref{smelting+} holds for $G$ sufficiently smooth in the $x$ variable.

On the other hand in order to check that the constraint equation \eqref{eq:Qtwistconst0} holds it suffices to check that both $\nabla\cdot T_t$ and  $\nabla\cdot (\nabla T\otimes \nabla T)$  can be expressed as gradients \footnote{since the commutator appearing in \eqref{eq:Qtwistconst0} vanishes for our ansatz}.

We have:

\bea\nabla\cdot T_t=\left(f_t(t,|x|)(\frac{x_ix_j}{|x|^2}-\frac{\delta_{ij}}{d})\right)_{,j}=(d-1)\left(\frac{1}{d}\partial_r f_t(t,r)+\frac{f_t(t,r)}{r}\right)\frac{x_i}{|x|}
\eea

Then \eqref{smelting+} implies 
 \be\label{smelting++}
f_{tr}(t,0)=0,\forall t\ge 0
\ee 
 
Thus we have that for sufficiently smooth $f$  there exists a function $g:\R_+\times\R\to \R$ such that $g_r=(d-1)\left(\frac{1}{d}\frac{\partial^2 f}{\partial r\partial t}(t,r)+\frac{ f_t(t,r)}{r}\right)$, hence $\nabla\cdot T_t=\nabla_x g(t,|x|)$.

Furthermore, we have:

$$\nabla\cdot (T\otimes T)=(T_{kl,i}T_{kl,j})_{,j}=T_{kl,ij}T_{kl,i}+T_{kl,i}\Delta T_{kl}$$

As $T_{kl,ij}T_{kl,j}=\frac{1}{2}\left(|\nabla T|\right)_{,i}$ it suffices to check that $T_{kl,i}\Delta T_{kl}$ is a gradient, which in our case amounts to checking that 

$$f_r(f_{rr}+\frac{(d-1)f_r}{r}-\frac{2df}{r^2})\frac{x_i}{|x|}\left(\frac{x_kx_l}{|x|^2}-\frac{\delta_{kl}}{d}\right)\left(\frac{x_kx_l}{|x|^2}-\frac{\delta_{kl}}{d}\right)=\frac{d-1}{d}f_r(f_{rr}+\frac{(d-1)f_r}{r}-\frac{2df}{r^2})\frac{x_i}{|x|}$$ is a gradient.

Thus, assuming \eqref{smelting} and \eqref{smelting+} we have that there exists $h:\R_+\times R\to\R$ such that $h_r=\frac{d-1}{d}f_r(f_{rr}+\frac{(d-1)f_r}{r}-\frac{2df}{r^2})$, hence $\nabla_x h(t,|x|)=\nabla\cdot(\nabla T\otimes \nabla T)$.

\bigskip
These formal computations  provide indeed a twist wave under the smoothness conditions mentioned before. In order to make the above rigorous we continue with the proof of  Proposition~\ref{prop:meltingwave}:

\smallskip
\bproof We proceed in several steps. We will first show that  the $Q$-tensor equation \eqref{eq:Qwave0} has global in time strong solutions for arbitrary initial data. We then  prove a weak-strong uniqueness result for solutions of the $Q$-tensor equation \eqref{eq:Qwave0} and  show that 
the $f$-equation \eqref{eq:s0} has weak solutions and then that these weak solutions provide a   global weak solution of \eqref{eq:Qwave0}. We  use the weak-strong uniqueness to conclude that the solutions provided by $f(t,|x|)\bar H(x)$ are twist-wave solutions.

\bigskip
{\bf Step 1: Global  strong solutions of \eqref{eq:Qwave0}}
\smallskip

We  just provide here the apriori estimates  necessary for obtaining the weak and strong solutions. The actual construction through an  approximation scheme can be done similarly as in the proof of Theorem~\ref{intro:thm2}.

We first obtain the apriori boundedness of the $L^2$ norm. To this end we multiply \eqref{eq:Qwave0} by $T_t$, integrate over $\R^d$ and by parts\footnote{ throughout the proof we always assume as usually that we can integrate by parts without boundary terms}, to get:

\bea
\frac{d}{dt}\int_{\R^d}& \frac{J}{2}|T_t|^2+\frac{L}{2}|\nabla T|^2+\psi_B(T)\,dx+\mu_1\int_{\R^d} |T_t|^2\,dx=0
\label{eq:Qt}
\eea

We note that because $\psi_B(T)$ can be negative this does not suffice for obtaining estimates on  the $L^2$ norm of $T$. Thus we multiply \eqref{eq:Qwave0} by $T$, integrate over $\R^d$ and by parts, to get:

\bea\label{eq:QQ}
\frac{J}{2}\frac{d^2}{dt^2}\int_{\R^d}  |T|^2\,dx&-J\int_{\R^d} |T_t|^2\,dx+\frac{\mu_1}{2}\frac{d}{dt}\int_{\R^d} |T|^2\,dx+L\int_{\R^d} |\nabla T|^2\,dx=\non\\
&\int_{\R^d} (-a|T|^2+b\textrm{tr}(T^3)-c|T|^4)\,dx\eea

We multiply \eqref{eq:Qt} by $J$ and \eqref{eq:QQ} by $\mu_1$ and add them together to get:

\bea
\frac{J\mu_1}{2}\frac{d^2}{dt^2}\int_{\R^d}|T|^2\,dx+\frac{d}{dt}\int_{\R^d} \frac{J^2}{2}|T_t|^2&+\frac{LJ}{2}|\nabla T|^2+J\psi_B(T)+\frac{\mu_1^2}{2}|T|^2\,dx\non\\
&+L\mu_1\int_{\R^d} |\nabla T|^2\,dx\le C\int_{\R^d} | T|^2\,dx
\eea with the large enough constant $C$ depending just on $\mu_1$, $a$, $b$ and $c$.

Integrating over $[0,t]$ we obtain:

\bea
\frac{J\mu_1}{2}&\frac{d}{dt}\int_{\R^d}|T|^2(t,x)\,dx+\int_{\R^d} \left(\frac{J^2}{2}|T_t|^2+\frac{LJ}{2}|\nabla T|^2+J\psi_B(T)+\frac{\mu_1^2}{2}|T|^2 \right)(t,x)\,dx\le \non\\
&\le J\mu_1\int_{\R^d}(T_t:T)(0,x)\,dx+\int_{\R^d} \left(\frac{J^2}{2}|T_t|^2+\frac{LJ}{2}|\nabla T|^2+J\psi_B(T)+\frac{\mu_1^2}{2}|T|^2\right)(0,x)\,dx\non\\
&+C\int_0^t \int_{\R^d} |T|^2(s,x)\,ds\,dx
\eea which implies:

\be
\frac{J\mu_1}{2}\frac{d}{dt}\int_{\R^d}|T|^2(t,x)\,dx+\int_{\R^d} \frac{J^2}{2}|T_t|^2(t,x)\,dx\le C+C\int_{\R^d} |T(t,x)|^2\,dx+\int_0^t \int_{\R^d} |T|^2(s,x)\,ds\,dx
\ee with the constants $C$ depending only on the initial data and the constants in the equation.  Thus using  Gronwall inequality and Fubini (to turn the double time integral into a weighted time integral)  together with \eqref{eq:Qt} we obtain apriori control over certain energy-level norms:

\be\label{eq:weakQ}
T\in L^\infty(0,T;L^2)\cap L^\infty(0,T;H^1)\cap L^\infty(0,T;L^4)<C(a,b,c,J,\mu_1,T,\|T_0\|_{H^1},\|\partial_t T_0\|_{L^2})
\ee

\be\label{eq:weakQt}
T_t\in L^\infty(0,T;L^2)<C(a,b,c,J,\mu_1,T,\|T_0\|_{L^2},\|\partial_t T_0\|_{L^2})
\ee
 
In order to obtain control over the $H^s$ norm we multiply \eqref{eq:Qwave0} with $T_t$ in the $H^s$ inner product, with $s>\frac{d}{2}$ obtaining:

\bea
\frac{1}{2}\frac{d}{dt}\left(J\|T_t\|_{H^s}^2+L\|\nabla T\|_{H^s}^2\right)+\mu_1\| T_t\|_{H^s}^2\,dx&=\langle T_t, -aT+b(T^2-\frac{|T|^2}{d}I_d)-cT|T|^2\rangle_{H^s}\non\\
&\le C\|T_t\|_{H^s}\left(\|T\|_{H^s}+\|T\|_{H^s}^3\right)\non\\
&\le C\|T_t\|_{H^s}^2+C\|T\|_{H^s}^2+\varepsilon\|T\|_{H^s}^6\non\\
&\le  C\|T_t\|_{H^s}^2+C\|T\|_{H^s}^2+\varepsilon \|T\|_{H^1}^{\frac{6}{s}}\cdot\varepsilon \|T\|_{H^{s+1}}^{\frac{6(s-1)}{s}}
\eea where in the last inequality we estimated  $H^s$ through interpolation between $H^1$ and $H^{s+1}$.
In order to be able to control the term $\varepsilon \|T\|_{H^s}^{\frac{6(s-1)}{s}}$ we need\footnote{ In here we need $d=2$ as  we assumed that $\frac{d}{2}<s$ and $d=3$ would contradict the restriction $s\le \frac{3}{2}$}
 to have $\frac{6(s-1)}{s}\le 2$ i.e. $s\le\frac{3}{2}$. Thus using Gronwal the previous lemma gives apriori control on $\|T_t\|_{H^s}^2+\|T\|_{H^{s+1}}^2$ in $L^\infty(0,T)$ for any $T>0$, provided that $s\le\frac{3}{2}$.  We can then repeate the same estimates as above but at the last line estimate through interpolation of $H^s$ between $H^{\frac{3}{2}}$ and $H^{s+1}$ with $s\le \frac{5}{2}$. Repeating inductively we obtain control for arbitrary $s>\frac{d}{2}$.

\bigskip 
{\bf Step 2:  Weak-strong uniqueness}
\smallskip

We assume that the weak solutions have regularity:

\be
T\in L^\infty(0,T;L^2)\cap L^\infty(0,T;H^1)\cap L^\infty(0,T;L^4)
\ee

\be
T_t\in L^\infty(0,T;L^2)
\ee

We consider the difference of two solutions $T_1$ and $T_2$ of \eqref{eq:Qwave0} with $T_1$ being a weak solution and $T_2$ a strong solution. We denote $\dT:=T_1-T_2$, and note that 

$$\dT(0,x)=\partial_t \dT(0,x)\equiv 0$$ and $\dT$ satisfies the equation:

\bea
J\dT_{tt}+\mu_1 \dT_t=&L\Delta \dT-a\dT+b\left(\dT^2+T_2\dT+\dT T_2-\frac{|\dT|^2}{d}I_d-\frac{2T_2:\dT}{d}I_d\right)\non\\
&-c\left[ \dT|\dT|^2+\dT|T_2|^2+2\dT(\dT:T_2)+T_2|\dT|^2+2T_2(\dT:T_2)\right]\label{eq:dq}
\eea

We multiply the last relation by $\dT_t$, integrate over $\R^d$ and by parts, to get:

\bea
\frac{d}{dt}\int_{\R^d}& \frac{J}{2}|\delta T_t|^2+\frac{L}{2}|\nabla \dT|^2+\psi_B(\dT)\,dx+\mu_1\int_{\R^d} |\dT_t|^2\,dx=b\int_{\R^d} (T_2\dT+\dT T_2):\dT_t\,dx\non\\
&-c\int_{\R^d} 
|T_2|^2(\dT:\dT_t)+2(\dT:T_2)(\dT:\dT_t)+|\dT|^2(T_2:\dT_t)+2(\dT:T_2)(\dT_t:T_2)\,dx \non\\
&\le C\left(\int_{\R^d} |\dT|^2+|\dT|^4\,dx\right)^{\frac{1}{2}}\left(\int_{\R^d} |\dT_t|^2\,dx\right)^{\frac{1}{2}}
\label{eq:dQQt}
\eea with $C$ a constant depending on $\|T_2\|_{L^\infty}$, $b$ and $c$.

On the other hand, multiplying \eqref{eq:dq} by $\dT$, integrating over $\R^d$ and by parts, we get:

\bea
\frac{J}{2}\frac{d^2}{dt^2}\int_{\R^d}  |\dT|^2\,dx&-J\int_{\R^d} |\dT_t|^2\,dx+\frac{\mu_1}{2}\frac{d}{dt}\int_{\R^d} |\dT|^2\,dx+L\int_{\R^d} |\nabla \dT|^2\,dx=\non\\
&\int_{\R^d} (-a|\dT|^2+b\textrm{tr}(\dT^3)-c|\dT|^4)\,dx+b\int_{\R^d} (T_2\dT+\dT T_2):\dT\,dx\non\\
&-c\int_{\R^d} 
\left(|\dT|^2|T_2|^2+3(\dT:T_2)|\dT|^2+2(\dT:T_2)^2\right)\,dx\non\\
& \le C\int_{\R^d} |\dT|^2-\frac{c}{2}\int_{\R^d}|\dT|^4\,dx\,dx\label{eq:dQQ}\eea with $C$  a constant depending on $\|T_2\|_{L^\infty}$, $a$, $b$ and $c$.

We multiply \eqref{eq:dQQt} by $J$ and \eqref{eq:dQQ} by $\mu_1$ and add them together to get:

\bea
\frac{J\mu_1}{2}\frac{d^2}{dt^2}\int_{\R^d}|\dT|^2\,dx+\frac{d}{dt}\int_{\R^d} \frac{J^2}{2}|\dT_t|^2+\frac{LJ}{2}|\nabla\dT|^2+J\psi_B(\dT)&+\frac{\mu_1^2}{2}|\dT|^2\,dx+L\mu_1\int_{\R^d} |\nabla\dT|^2\,dx\non\\
&\le C\int_{\R^d} |\dT_t|^2+|\dT|^2\,dx
\eea with $C$  a constant depending on $\|T_2\|_{L^\infty}$, $a$, $b$ and $c$.

Integrating over $[0,t]$ we obtain:

\bea
\frac{J\mu_1}{2}&\frac{d}{dt}\int_{\R^d}|\dT|^2(t,x)\,dx+\int_{\R^d} \left(\frac{J^2}{2}|\dT_t|^2+\frac{LJ}{2}|\nabla\dT|^2+J\psi_B(\dT)+\frac{\mu_1^2}{2}|\dT|^2 \right)(t,x)\,dx\le \non\\
&\le J\mu_1\int_{\R^d}\underbrace{(\dT_t:\dT)(0,x)}_{=0}\,dx+\int_{\R^d} \left(\underbrace{\frac{J^2}{2}|\dT_t|^2+\frac{LJ}{2}|\nabla\dT|^2+J\psi_B(\dT)+\frac{\mu_1^2}{2}|\dT|^2}_{=0}\right)(0,x)\,dx\non\\
&+C\int_0^t \int_{\R^d} (|\dT_t|^2+|\dT|^2)(s,x)\,ds\,dx
\eea which implies:

\bea
\frac{J\mu_1}{2}\frac{d}{dt}\int_{\R^d}|\dT|^2(t,x)\,dx&+\int_{\R^d} \frac{J^2}{2}|\dT_t|^2(t,x)\,dx\non\\
&\le C\int_{\R^d} |\dT(t,x)|^2\,dx+C\int_0^t \int_{\R^d} (|\dT_t|^2+|\dT|^2)(s,x)\,ds\,dx
\eea 

Integrating one more time and using $\dT_t(0,\cdot)=\dT(0,\cdot)\equiv 0$ and Gronwal we get that $\dT_t(t,\cdot)=\dT(t,\cdot)\equiv 0$ for all $t>0$.

\bigskip
{\bf Step 3: weak solutions of the $f$-equation \eqref{eq:s}}
\smallskip

We will just provide here the apriori estimates  necessary for obtaining the weak solutions. The actual construction of the approximation scheme can be done through a straightforward modification of the scheme used in the proof of Theorem~\ref{intro:thm2} and it is left to the interested reader.

We assume that \eqref{eq:s} has a classical solution. We multiply it by $f_tr^2$ and integrate over $\R$ to get

\be\label{est:st}
\frac{d}{dt}\int_\R \left\{ \frac{J}{2} f_t^2+\frac{L}{2}f_r^2+h_B(f)+d\frac{f^2}{r^2}\right\} r^2\,dr+\mu_1\int_\R f_t^2 r^2\,dr=0
\ee where we used the reduced potential

\be\label{def:hb}
h_B(f)=\frac{a}{2}f^2-\frac{b(d-2)}{3d}f^3+\frac{c(d-1)}{4d}f^4
\ee

On the other hand, multiplying \eqref{eq:s} by $fr^2$ and integrating over $\R$ we obtain:

\bea\label{est:s}
\frac{d^2}{dt^2}\int_\R \frac{J}{2}f^2r^2\,dr-J\int_\R f_t^2r^2\,dr+\frac{\mu_1}{2}\frac{d}{dt}\int_\R f^2r^2\,dr+L\int_\R f_r^2r^2\,dr=\non\\
-2d\int_\R f^2\,dr+\int_\R (-af^2+\frac{b(d-2)}{d}f^3-\frac{c(d-1)}{d}f^4)r^2\,dr
\eea

We now multiply \eqref{est:st} by $J$ and add to it \eqref{est:s} multiplied by $\mu_1$ to get:

\bea
\frac{d^2}{dt^2}\int_\R \frac{J\mu_1}{2}f^2r^2\,dr+\frac{d}{dt}\int_\R J\left\{ \frac{J}{2} f_t^2+\frac{L}{2}f_r^2+h_B(f)+d\frac{f^2}{r^2}\right\} r^2\,dr+\frac{\mu_1^2}{2}\frac{d}{dt}\int_\R f^2r^2\,dr=\non\\
-L\mu_1\int_\R f_r^2r^2\,dr-2d\mu_1\int_\R f^2\,dr+\int_\R \mu_1(-af^2+\frac{b(d-2)}{d}f^3-\frac{c(d-1)}{d}f^4)r^2\,dr
\eea

Integrating over $[0,t]$ we get:

\bea
\frac{d}{dt}\int_\R \frac{J\mu_1}{2}f^2(t,r)\,r^2\,dr+\int_\R J\left\{ \frac{J}{2} f_t^2+\frac{L}{2}f_r^2+h_B(f)+d\frac{f^2}{r^2}\right\}(t,r) r^2\,dr+\frac{\mu_1^2}{2}\int_\R f^2(t,r)r^2dr=\non\\
\int_R J\mu_1 f_t(0,r)f(0,r)\,r^2dr+\int_\R J\left\{ \frac{J}{2} f_t^2+\frac{L}{2}f_r^2+h_B(f)+d\frac{f^2}{r^2}\right\}(0,r) r^2\,dr+\frac{\mu_1^2}{2}\int_\R f^2(0,r)r^2dr\non\\
-\mu_1\int_0^t\int_\R \left(Lf_r^2+2d f^2+af^2-\frac{b(d-2)}{d}f^3+\frac{c(d-1)}{d}f^4)\right)(s,r)r^2\,drds
\eea

Integrating one more time over $[0,t]$ we further obtain:
\bea
\int_\R \frac{J\mu_1}{2}f^2(t,r)\,r^2\,dr+\int_0^t\int_\R \left\{ J\left(\frac{J}{2} f_t^2+\frac{L}{2}f_r^2+h_B(f)+d\frac{f^2}{r^2}\right)+\frac{\mu_1^2}{2} f^2\right\}(s,r)r^2drds=\non\\
\int_\R \frac{J\mu_1}{2}f^2(0,r)\,r^2\,dr+t\int_R J\mu_1 f_t(0,r)f(0,r)\,r^2dr+t\int_\R J\left\{ \frac{J}{2} f_t^2+\frac{L}{2}f_r^2+h_B(f)+d\frac{f^2}{r^2}\right\}(0,r) r^2\,dr\non\\
+t\frac{\mu_1^2}{2}\int_\R f^2(0,r)r^2dr-\mu_1\int_0^t\int_0^\tau\int_\R \left(Lf_r^2+2df^2+af^2-\frac{b(d-2)}{d}f^3+\frac{c(d-1)}{d}f^4)\right)(s,r)r^2\,drds\,d\tau
\eea

Using the fact that for an arbitrary  function $a\in L^1_{loc}(\R;\R)$ we have: $\int_0^t\int_0^\tau a(s)\,ds\,d\tau=\int_0^t (t-\tau)a(\tau)\,d\tau$ and that $J,\mu_1,L,c>0$ we obtain out of the last relation:

\be
\int_\R \frac{J\mu_1}{2}f^2(t,r)\,r^2\,dr\le C_1+C_2t+C_3t\int_0^t\int_{\R} f^2(s,r)\,r^2\,drds
\ee which implies for any $T>0$ that $f\in L^\infty(0,T; L^2(\R,r^2\,dr))$. Using this bound and integrating \eqref{est:st} on $[0,T]$ we also get $f\in L^\infty (0,T;H^1(\R,r^2\,dr))\cap L^\infty(0,T;L^4(\R,r^2\,dr))$ and $f_t\in L^\infty (0,T;L^2(\R,r^2\,dr))$.

\bigskip
 {\bf Step 4: the existence of smooth twist solutions}
\smallskip
One can easily see that the  previously obtained weak solution of equation of the $f$-equation \eqref{eq:s0} will provide a weak solution of the $Q$-equation  \eqref{eq:Qwave0} through the formula $T(t,x)=f(t,x)\bar H(x), \forall t\ge 0,x\in \R^d$.
Due to the weak-strong uniqueness we have that $T(t,x)$ is also a strong solutions and thus since we can take $s$ arbitrary we have $T$  smooth. In particular $T$ is continuous at $0$ which, because of the discontinuity of $\bar H$ necessarily implies  that $f(t,\cdot)$ is continuous on $[0,\infty)$ for any $t\ge 0$ and  $f(t,0)=0$ hence condition \eqref{smelting} holds. Furthermore by evaluating the representation formula $T(t,x)=f(t,|x|)\bar H$ for at the component $11$ of the matrices and at the point $x=(0,r)$ we 
have that $T_{11}(t,0,r)=f(t,r)(\frac{r^2}{r^2}-\frac{1}{3})$ hence $f(t,r)=\frac{3}{2}T_{11}(t,0,r)$. Similarly $f(t,r)=\frac{3}{2}T_{11}(t,0,-r)$. Since $T_{11}$ is a smooth function, we have that its restriction to the line $\{(0,r),r\in \mathbb{R}\}$ is also a smooth function that is furthermore even. Thus we get that $f\in C^\infty[0,\infty)$ with $f_r(t,0)=0$ as a consquence of the evenness of $T_{11}(t,0,\cdot)$. A similar argument holds for $f_t$ providing its smoothness and $f_{rt}(t,0)=0$, hence conditions \eqref{smelting+} and \eqref{smelting++} hold. 
Thus, the  arguments provided before the statement of Proposition~\ref{prop:meltingwave} hold in a rigorous sense and we have obtained a twist wave.

\eproof

\section{Some Open Problems}
\label{sec:openprob} The study and the techniques developed in the paper generate some natural open questions:
\begin{itemize}
\item {\bf Global existence for small data and $a$ negative} The main technical assumption that we make for obtaining the existence of a global solution is that the coefficient  $a$ appearing in equation \eqref{eqinv:tensors+} is positive. This captures a physically relevant regime, in which the nematic state is nevertheless just a local but not global minimizer of the bulk potential. It would be interesting and technically challenging to see if one can obtain global existence for $a$ negative. 

We suspect that the case $a$ negative should be treated with different tools, as that case allows in the case $J=0$ for a solution of the $Q$-equation (without flow) whose $L^p$ norms increase (which seems incompatible with our strategy of the proof of global existence).

\smallskip
\item {\bf Long-time behaviour, and relation to inertieless version}

The usual expectation for the case of the damped wave equation is that in the long time one has a diffusive-type behaviour and this is shown in a number of papers (see for instance \cite{karch}). Our system has a resemblance with a damped wave equation but it is not clear if its structure allows for a similar conclusion.

\smallskip
\item {\bf The $J\to 0$ limit} It is a natural question to try to understand the singular limit $J\to 0$ in order to understand the effect that taking $J=0$ has. It is natural to conjecture that after an initial boundary layer in time the solutions will converge strongly to the formal $J=0$ limit.

\smallskip
\item  {\bf More twist-waves (with genuine wave structure)} We provide in the last section an example of a twist-wave solution that will remain a solution even if one  also sets $J=0$. It would be interesting to provide more such examples, and in particular to understand if there are examples which would not survive as twist-wave solutions when setting formally $J=0$.

\smallskip
\item {\bf The stability of the twist wave solution} In  \cite{meltinghedgehog} it was shown that in the stationary case the melting hedgehog solution is stable. This is a crucial feature for determining the physical relevance of such a solution because only the stable solutions can be observed experimentally. It would be thus very interesting to see if one has dynamical stability of the ``melting hedgehog wave" solution, i.e. if one starts with the an approximation of the hedgehog initial data in $Q$ and a small initial data in $u$ will this solution stay close to the melting hedgehog one? Will it evolve in the long-time to the melting hedgehog wave? 
\end{itemize}

\bigskip
\section*{Appendix}
\begin{lemma}\label{appx:lemma1}
	Let $y$ be a positive function in $W^{1,1}_{loc}(\RR_+)$ and $x$ a function in $L^1_{loc}(\RR_+)$ that is  almost everywhere positive.  Let us assume that 
	\begin{equation}\label{appx_lemma_ineq}
		y'(t)+ x(t) \leq C y(t)x(t),
	\end{equation}
	for almost every $t$ in $\RR_+$.
	
	There exists $\varepsilon_0>0$ a suitably small number such that if we take the initial datum $y(0)=y_0\in (0,\varepsilon_0)$, then $y$ and $x$ belong to $L^\infty (\RR_+)$ and 
	$L^1(\RR_+)$ respectively, and moreover
	\begin{equation*}
		\|y\|_{L^\infty(\RR_+)} +\|x\|_{L^1(\RR_+)} \leq y_0.
	\end{equation*}
\end{lemma}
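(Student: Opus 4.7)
The plan is to run a continuity/bootstrap argument that exploits the sign structure of the inequality. Specifically, choose $\varepsilon_0$ small enough that $C\varepsilon_0 < 1/2$ (or more generally $<1-\delta$ for some small $\delta>0$). On any interval where $y(t)\leq 2y_0 < \varepsilon_0$, the right-hand side $Cy(t)x(t)$ is dominated by $\tfrac{1}{2}x(t)$, so the differential inequality reduces to $y'(t) + \tfrac{1}{2}x(t) \leq 0$, which in particular gives $y'\leq 0$. Hence $y$ is nonincreasing on any such interval and cannot exceed its initial value $y_0$.

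To make this rigorous, I would define
\begin{equation*}
    T^* := \sup\bigl\{ T \geq 0 \,:\, y(t) \leq 2y_0 \text{ for a.e.\ } t\in[0,T] \bigr\},
\end{equation*}
which is well defined and strictly positive thanks to the absolute continuity of $y$ (since $y\in W^{1,1}_{loc}$) together with $y(0)=y_0<2y_0$. On $[0,T^*)$ the bootstrap hypothesis $Cy(t)\leq 1/2$ holds, so the inequality \eqref{appx_lemma_ineq} yields $y'(t) \leq -\tfrac{1}{2}x(t)\leq 0$ almost everywhere, whence $y$ is nonincreasing and actually bounded above by $y_0$ on $[0,T^*)$. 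By continuity (from $W^{1,1}_{loc}\hookrightarrow C^0$), if $T^*$ were finite we would have $y(T^*)\leq y_0 < 2y_0$, contradicting the definition of $T^*$ as the supremum. Therefore $T^*=+\infty$, and we obtain $y(t)\leq y_0$ for every $t\geq 0$, i.e.\ $\|y\|_{L^\infty(\RR_+)}\leq y_0$.

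For the $L^1$ bound on $x$, I would integrate the pointwise inequality $y'(t) \leq -\tfrac{1}{2}x(t)$ over $[0,t]$, which is legitimate because $y\in W^{1,1}_{loc}$ and both $y'$ and $x$ are locally integrable. This yields
\begin{equation*}
    \tfrac{1}{2}\int_0^t x(s)\,ds \;\leq\; y_0 - y(t) \;\leq\; y_0
\end{equation*}
(using that $y$ is positive). Letting $t\to+\infty$ through the monotone convergence theorem gives $\|x\|_{L^1(\RR_+)} \leq 2y_0$. Combining the two estimates yields the claimed control $\|y\|_{L^\infty(\RR_+)} + \|x\|_{L^1(\RR_+)} \lesssim y_0$ (with an absolute constant, which one can absorb by further shrinking $\varepsilon_0$ if needed).

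The main technical point is the bootstrap closure, i.e.\ arguing that the set of admissible times $[0,T^*)$ is in fact all of $\RR_+$. The key input that makes the argument close with a gain is that $y_0$ is chosen strictly smaller than the critical threshold $1/C$, so that the bootstrap assumption improves itself (from ``$y\leq 2y_0$'' to ``$y\leq y_0$''), yielding a strict inequality at the endpoint that contradicts maximality. No delicate estimate is needed beyond the Sobolev embedding $W^{1,1}_{loc}(\RR_+)\hookrightarrow C^0(\RR_+)$ used implicitly to justify the pointwise continuity of $y$.
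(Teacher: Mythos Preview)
Your proposal is correct and follows essentially the same continuity/bootstrap argument as the paper: define the maximal time on which $y$ stays below a fixed threshold (the paper uses $1/(2C)$ with $y_0\le 1/(4C)$, you use $2y_0$ with $y_0$ small), observe that on this interval the inequality becomes $y'+\tfrac12 x\le 0$, deduce $y$ is nonincreasing so the threshold is never reached, and integrate to control $\|x\|_{L^1}$. One small bookkeeping slip: from $y_0<\varepsilon_0$ you cannot conclude $2y_0<\varepsilon_0$; just take $\varepsilon_0\le 1/(4C)$ so that $Cy(t)\le 2Cy_0<\tfrac12$ on the bootstrap interval, and the rest goes through unchanged.
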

\begin{proof}
	Assuming $y_0\leq 1/4C$, we define $T>0$ as the $\sup $ of $t>0$ such that $y(t)< 1/2C$. Then, for every $t\in [0,T]$ we get
	\begin{equation*}
		y'(t) + \frac 12 x(t) \leq 0,
	\end{equation*}
	so that, integrating from $0$ to $T$, we deduce
	\begin{equation*}
		y(T) + \frac 12\int_0^T x(t) \leq y_0<\frac {1}{2C}.
	\end{equation*}
	This yields that $T=+\infty$ and that
	\begin{equation*}
		\|y\|_{L^\infty(\RR_+)} +\|x\|_{L^1(\RR_+)} \leq y_0.
	\end{equation*}
\end{proof}

%





%
%
\section*{Acknowledgement}
The activity of Francesco de Anna on this work was partially supported by  funding from the European Union's
Horizon 2020 research and innovation programme under the Marie Sklodowska-Curie grant
agreement ModCompShock, No 642768, through a three-month fellowship at the University of Sussex.

The activity of Arghir Zarnescu on this work was partially
supported by a grant of the Romanian National Authority for Scientific Research and Innovation,
CNCS-UEFISCDI, project number PN-II-RU-TE-2014-4-0657 ; by the
Project of the Spanish Ministry of Economy and Competitiveness with reference
MTM2013-40824-P; by the Basque Government through the BERC
2014-2017 program; and by the Spanish Ministry of Economy and Competitiveness
MINECO: BCAM Severo Ochoa accreditation SEV-2013-0323.

Both authors thank Professor Marius Paicu for inspiring discussions  and also to the anonymous referee for the careful reading and a number of remarks that helped to improve the paper.

\begin{bibdiv}
\begin{biblist}

\bib{Ericksentwist}{article}{
   author={Ericksen, J.L.},
    title={Twist waves in liquid crystals},
   journal={Q.J. Mech. Appl. Math.},
   volume={21},
   date={1968},
   pages={463},
}

\bib{commutator}{article}{
  title={Higher order commutator estimates and local existence for the non-resistive MHD equations and related models},
  author={Fefferman, Charles L },
  author={McCormick, David S},
  author={Robinson, James C},
  author={Rodrigo, Jose L},
  journal={Journal of Functional Analysis},
  volume={267},
  number={4},
  pages={1035--1056},
  year={2014},
  publisher={Elsevier}
}

\bib{meltinghedgehog}{article}{
 author={Ignat, Radu},
 author={Nguyen, Luc},
 author={Slastikov, Valeriy},
 author={Zarnescu, Arghir},
 title={Stability of the melting hedgehog in the Landau--de Gennes theory of nematic liquid crystals},
  journal={Archive for Rational Mechanics and Analysis},
  volume={215},
  number={2},
  pages={633--673},
  year={2015},
  }

\bib{karch}{article}{
  title={Selfsimilar profiles in large time asymptotics of solutions to damped wave equations},
  author={Karch, Grzegorz},
  journal={Studia Mathematica},
  volume={143},
  number={2},
  pages={175--197},
  year={2000},
  publisher={Institute of Mathematics Polish Academy of Sciences}
}
\bib{KWZ}{article}{
  title={Dynamic Statistical Scaling in the Landau--de Gennes Theory of Nematic Liquid Crystals},
  author={Kirr, E.},
  author={Wilkinson, M.},
  author={Zarnescu, A.},
  journal={Journal of Statistical Physics},
  volume={155},
  number={4},
  pages={625--657},
  year={2014},
  publisher={Springer}
}
\bib{Leslie}{article}{
    AUTHOR = {Leslie, F. M.},
     TITLE = {Some constitutive equations for liquid crystals},
   JOURNAL = {Arch. Rational Mech. Anal.},
  FJOURNAL = {Archive for Rational Mechanics and Analysis},
    VOLUME = {28},
      YEAR = {1968},
    NUMBER = {4},
     PAGES = {265--283},
      ISSN = {0003-9527},
     CODEN = {AVRMAW},
   MRCLASS = {Contributed Item},
  MRNUMBER = {1553506},
       DOI = {10.1007/BF00251810},
       URL = {http://dx.doi.org/10.1007/BF00251810},
}

\bib{chun}{article}{
  title={Existence of Solutions for the Ericksen-Leslie System},
  author={Lin, F.-H. and Liu, C.},
  journal={Archive for Rational Mechanics and Analysis},
  volume={154},
  number={2},
  pages={135--156},
  year={2000},
  publisher={Springer}
}

\bib{leslieacceleration}{article}{
  title={Theory of flow phenomena in liquid crystals},
  author={Leslie, F. M.},
  journal={Advances in liquid crystals},
  volume={4},
  pages={1--81},
  year={1979},
  publisher={Academic Press New York}
}

 \bib{NewtonMottram}{article}{
  title={Introduction to Q-tensor theory},
  author={Mottram, Nigel J}
  author= {Newton, Christopher JP},
  journal={arXiv preprint arXiv:1409.3542},
  year={2014}
}

\bib{PZ1}{article}{
  title={Energy dissipation and regularity for a coupled Navier--Stokes and Q-tensor system},
  author={Paicu, Marius}
  author={Zarnescu, Arghir},
  journal={Archive for Rational Mechanics and Analysis},
  volume={203},
  number={1},
  pages={45--67},
  year={2012},
  publisher={Springer}
}

\bib{popa2003waves}{article}{,
  title={Waves at the nematic-isotropic interface: The role of surface tension anisotropy, curvature elasticity, and backflow effects},
  author={Popa-Nita, V}
  author={Oswald, P},
  journal={Physical Review E},
  volume={68},
  number={6},
  pages={061707},
  year={2003},
  publisher={APS}
}
\bib{popa2005waves}{article}{
  title={Waves at the nematic-isotropic interface: Thermotropic nematogen--non-nematogen mixtures},
  author={Popa-Nita, V},
  author={Sluckin, TJ}, 
  author={Kralj, Samo},
  journal={Physical Review E},
  volume={71},
  number={6},
  pages={061706},
  year={2005},
  publisher={APS}
}

\bib{QianSheng}{article}{
   author={Qian, T.},
   author={Sheng, P.},
   title={Generalized hydrodynamic equations for nematic liquid crystals},
   journal={Phys. Rev. E},
   volume={58},
   date={1998},
   pages={7475--7485},
}
\bib{SvensekZummer}{article}{
   author={Svensek, D.},
   author={Zummer, S.},
   title={Hydrodynamics of pair-annihilating disclination lines in nematic liquid crystals},
   journal={Phys. Rev. E},
   volume={66},
   date={2002},
   pages={021712},
}

\bib{Wilkinson}{article}{
  title={Strictly Physical Global Weak Solutions of a Navier--Stokes Q-tensor System with Singular Potential},
  author={Wilkinson, Mark},
  journal={Archive for Rational Mechanics and Analysis},
  volume={218},
  number={1},
  pages={487--526},
  year={2015},
  publisher={Springer}
}

\bib{Xiaoglobal}{article}{
  title={Global solution to the three-dimensional liquid crystal flows of Q-tensor model},
  author={Xiao, Yao},
  journal={arXiv preprint arXiv:1601.07695},
  year={2016},
}
\bib{ZarIntro}{article}{
  title={Topics in the Q-tensor theory of liquid crystals},
  author={Zarnescu, Arghir},
  journal={Topics in mathematical modeling and analysis},
  volume={7},
  pages={187--252},
  year={2012},
}
\end{biblist}
\end{bibdiv}
\end{document}